\begin{document}
	
	\title{Analysis of Dual-Based PID Controllers through Convolutional Mirror Descent}
	
	\author{Santiago R. Balseiro\thanks{Columbia Business School and Google Research (srb2155@columbia.edu).} \and Haihao Lu\thanks{The University of Chicago, Booth School of Business (haihao.lu@chicagobooth.edu). Part of the work was done at Google Research.} \and Vahab Mirrokni\thanks{Google Research (mirrokni@google.com)}\and Balasubramanian Sivan\thanks{Google Research (balusivan@google.com)}}
	
	\maketitle
	
	\begin{abstract}
	Dual-based proportional-integral-derivative (PID) controllers are often employed in practice to solve online allocation problems with global constraints, such as budget pacing in online advertising. However, controllers are used in a heuristic fashion and come with no provable guarantees on their performance. This paper provides the first regret bounds on the performance of  dual-based PID controllers for online allocation problems. We do so by first establishing a fundamental connection between dual-based PID controllers and a new first-order algorithm for online convex optimization called \emph{Convolutional Mirror Descent} (CMD), which updates iterates based on a weighted moving average of past gradients. CMD recovers, in a special case,  online mirror descent with momentum and optimistic mirror descent. We establish sufficient conditions under which CMD attains low regret for general online convex optimization problems with adversarial inputs. We leverage this new result to give the first regret bound for dual-based PID controllers for online allocation problems. As a byproduct of our proofs, we provide the first regret bound for CMD for non-smooth convex optimization, which might be of independent interest.
	\end{abstract}

\onehalfspacing

\section{Introduction}

Dual-based PID controllers are broadly used in practice to solve online optimization problems with global constraints. To concretely illustrate their use, consider a central challenge that large internet advertising platforms solve every day: pacing the budgets of advertisers' campaigns in repeated auctions. Advertisers run daily/weekly/monthly campaigns and specify budgets, which are hard constraints on the maximum amount that they are willing to pay during their campaigns. A budget pacing system bids in repeated auctions on behalf of the advertisers with the goal of maximizing the advertisers' sum of utilities subject to the budget constraint. Pacing budgets is a key problem and has seen a lot of activity in the past few years~\citep{zhang2016feedback,BalseiroGur2019MS, conitzer2021multiplicative, tashman2020dynamic,  ye2020cold}. The key challenge in a pacing system is that the budget is shared across all the auctions advertisers participate during their campaign. Hence bidding high in the current auction comes with an opportunity cost of not being able to capitalize on valuable future opportunities. 


In practice, advertisers' budgets are often paced using PID controllers. Consider an advertiser who participates in $T$ auctions during their campaign with the aim of maximizing their cumulative utilities subject to a total budget of $B$. Suppose the advertiser's valuation for each opportunity and the competition in the auction for these $T$ rounds are stationary, e.g., drawn independently from a fixed distribution. In this case, to avoid bidding too high and missing future opportunities, the advertiser should seek to spend, on average, an amount $B/T$ per round. A PID controller in such a setting would iteratively update a control variable as a function of the “error” term, namely, the difference between the ideal target spend $B/T$ and the actual spend. Typically the iterative update has terms proportional to the error, to the integral (or the summation) over the past errors, and to the derivative (or finite difference) of the error. 

More generally, PID controllers are one of the most common solutions to control problems in practice~\citep{aastrom2006advanced}, and they are used extensively for resource allocation problems. In a general online allocation problem, there are different types of resources (as opposed to the single resource in the budget pacing example), and requests are to be allocated subject to global resource constraints. As requests arrive sequentially, the decision maker is disclosed by the functions that map actions to rewards and resource consumptions and, after observing these functions, takes an action that results in a certain amount of reward and a certain amount of resource consumption for every resource. These reward and resource-consumption functions could be potentially non-convex, and, importantly, the characteristics of future requests are uncertain. Our analysis in this work applies to this general class of problems. We formally introduce online allocation problems in Section~\ref{sec:online-allocation} and the special case of budget pacing is expanded further in Section~\ref{sec:BudgetExample}.

\emph{Theory versus practice gap, and our central question.} PID controllers are extremely simple and, due to their flexibility and adaptability, extensively used in practice. Despite their wide adoption, however, these controllers come with no theoretical guarantees for online allocation problems---not even in the single resource example of budget pacing. So far, they have primarily been used in a heuristic fashion in bidding and budget pacing, among others (see, e.g., \citealt{tashman2020dynamic,zhang2016feedback,smirnov2016online,yang2019bid,ye2020cold}). The goal of this paper is to bridge the gap between theory and practice by providing a positive answer to the following question: \emph{Can we analyze PID controllers and establish provable regret guarantees on their performance?} 

\emph{What are dual-based PID controllers and why are these worth studying?} A controller is a simple feedback loop that iteratively updates control variables to reduce steady-state errors. As mentioned, in the context of online allocation, it is natural to set the error to be the difference between the ideal and actual resource consumption. But what should we pick as control variables? A natural candidate for the control variables are the dual variables corresponding to the resource constraints. The motivation is as follows. Online allocation problems can be efficiently solved using dual-based approaches that relax the resource constraints and penalize their violations by moving them to objective using dual variables. These dual variables act as the opportunity cost of consuming a unit of resource. The benefit of these dual-based approaches is that they allow one to decompose a complex problem with global constraints into simpler sub-problems that are no longer constrained. Dual-based allocation methods have been studied extensively in the past decades and have proven to perform well, both in theory and practice.

The main challenge, however, is that the optimal dual variables are often unavailable ahead of time when making online decisions. Therefore, a natural approach is to guess initial values for the dual variables and employ a PID controller on top of them to guide future allocations by controlling their evolution. When the error is positive (i.e., the actual resource consumption in a period is smaller than the ideal one), the controller would reduce dual variables, which has the effect of reducing the opportunity cost of future opportunities and decreasing the error rate by increasing future resource consumption. Figure~\ref{fig:PIcontroler}  gives a block diagram representation of a typical controller. A key open question so far has been to establish performance guarantees for this natural algorithm that performs controller updates on dual variables. 


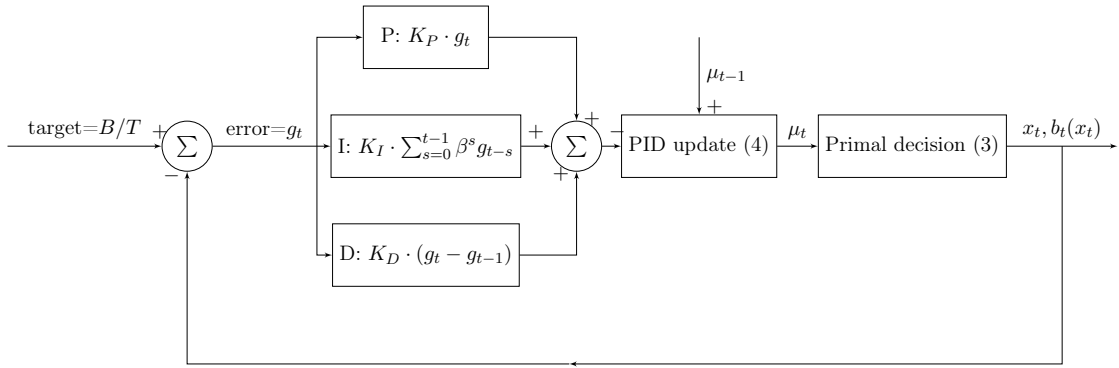
\begin{figure*}[!htb]
    \centering
    \resizebox{0.9\textwidth}{!}{%
    \begin{tikzpicture}[auto, node distance=2cm,>=latex']

        \node [input, name=input] {};
        \node [sum, right of=input,xshift=6em] (sum) {$\sum$};
        \coordinate [right of = sum,xshift=1em] (empty);
        \node [block, right of = empty] (Icontroller) {I: $K_I \cdot \sum_{s=0}^{t-1} \beta^s g_{t-s}$};
        \node [block, below of=Icontroller] (Dcontroller) {D: $K_D \cdot (g_t - g_{t-1})$};
        \node [block, above of=Icontroller] (Pcontroller) {P: $K_P \cdot g_t$};
        \node [sum, right of=Icontroller, 
                node distance=2cm,xshift=2em] (sum2) {$\sum$};
        \draw [->] (Pcontroller) -| node[pos=0.99] {$+$}  node[name=u] {} (sum2);
        \draw [->] (Icontroller) --  node {$+$} (sum2);
        \draw [->] (Dcontroller) -|  node[pos=0.99] {$+$}  node[name=u] {} (sum2);
        \node [block, right of=sum2, node distance=2.25cm] (sum3) {PID update \eqref{eq:non-linear-update}};
        \coordinate [above of=sum3] (mu_pre) {};
        \node [block, right of=sum3, xshift=5em, node distance=2cm] (system) {Primal decision \eqref{eq:primal_decision}};
        \node [output, right of=system, xshift=2em, node distance=3cm] (output) {};
        \coordinate [below of=u] (measurements);
    
        \draw [draw,->] (input) -- node[pos=0.99] {$+$} node [align=center] {\text{target}=$B/T$} (sum);
        \draw [-] (sum) -- node  [align=center] {\text{error}=$g_t$} (empty);
        \draw [->] (empty) |- node {} (Dcontroller);
        \draw [->] (empty) |- node {} (Icontroller);
        \draw [->] (empty) |- node {} (Pcontroller);
        \draw [->] (sum2) --  node[pos=0.8] {$-$} node {} (sum3);
        \draw [->] (sum3) -- node {$\mu_t$} node {} (system);
        \draw [->] (mu_pre) -- node {$\mu_{t-1}$} node[pos=0.9] {$+$} node {} (sum3);
        \draw [->] (system) -- node [name=y] {$x_t, b_t(x_t)$}(output);
        \draw [->] (y) |- (measurements);
        \draw [->] (measurements) -| node[pos=0.99] {$-$} node [near end] {} (sum);
    \end{tikzpicture}
    }
    \caption{Block diagram representation of the dual-based PID controller.}
    \label{fig:PIcontroler}
\end{figure*}

\subsection{Main Contributions}

Our main contribution is to analyze dual-based PID controllers and establish near-optimal regret guarantees for online resource allocation problems. We achieve this in three steps (steps 1 and 2 are our new contribution):

\begin{enumerate}
\item We make a formal connection between dual-based PID controllers and first-order methods for online allocation problems. In particular, we establish an equivalence between dual-based PID controllers and a new first-order algorithm for online optimization, which we call Convolutional Mirror Descent (CMD). CMD updates the iterates using a discrete convolution of past gradients with a fixed sequence $\{\lambda_i\}_{i\ge0}$, i.e., it gives weight $\lambda_i$ to the $i$-th past gradient. Any PID controller can be implemented by suitably picking the weights $\lambda_i$ of CMD. As we discuss shortly, this connection is of independent interest given the wide adoption of PID controllers and uncovers a link between optimal control theory and online algorithms.

\item We give the first regret analysis of CMD for online convex optimization, which is a powerful and widely adopted framework for online decision making~\citep{hazan2016introduction}. To the best of our knowledge, ours is the first paper to consider CMD for online convex optimization. We prove regret bounds for the case of adversarial inputs, establishing the robustness of this algorithm. Our regret bounds are parametric and isolate a few key metrics of the weights $\lambda_i$ that affect performance. In particular, CMD allows to recover mirror descent with momentum and optimistic mirror descent as special cases.


\item Using the machinery from~\cite{balseiro2020best}, we make the connection between online resource allocation and online convex optimization, and close the loop by establishing the first regret guarantees for dual-based PID controllers. {Numerical experiments on online linear programs support our theoretical findings.}
\end{enumerate}

\paragraph{The PID Controller-Mirror Descent connection.} The crux of the formal equivalence between dual-based PID controllers and dual-based first-order methods for online convex optimization is as follows. PID controllers update their control variable based on an error term that measures the difference between a target resource consumption (often called reference resource consumption) and the actual resource consumption. Online mirror descent updates the dual variable based on the subgradient of the dual function. The connection comes from the fact that the error term in a dual-based PID controller is \emph{identical} to the subgradient of the dual function in online mirror descent. This connection acts as the foundation of the equivalence (our first contribution). The connection is formally developed in Section~\ref{sec:connection} and guarantees that any analysis of CMD immediately yields guarantees for the dual-based PID controller. 

\emph{Momentum and the I-term.} The usual I-term in a PID controller adds all previous errors. It is straightforward to establish that such a crude I-term introduces undamped oscillations, and, therefore, will never converge. A natural way to control such oscillations is to consider an exponentially weighted average of past subgradients. This idea has received a lot of attention in the online optimization literature and is equivalent to \emph{momentum}. Momentum dates back to \cite{polyak1964some} and is known to provide faster convergence rates in deterministic problems. Momentum-based methods, in particular, have been used extensively in optimization and machine learning~\citep{kingma2015adam,sutskever2013importance,nesterov1983method}. The usual definition of momentum is to add a difference between two successive iterates during the update---this corresponds to an exponentially decaying sum over all the gradients, and this is the interpretation of the I-term we adopt in this paper. 

\emph{Optimism and the D-term.} The D-term in a PID controller considers the difference between two consecutive errors. The D-term can be related to optimistic gradient descent, in which the difference between two consecutive gradients is added on top of a gradient step. In machine learning, optimistic gradient descent was proposed and used for minimax optimization and training GANs ~\citep{daskalakis2017training,daskalakis2018limit}, where it has been shown to improve the stability of gradient descent~\citep{daskalakis2017training,mokhtari2020unified}.


\emph{Non-linear controllers and mirror descent.} In many applications, the control variables are adjusted multiplicatively instead of additively as additive updates tend to be too sensitive to the tuning parameters. Moreover, many controllers used in practice do not act directly over dual variables but instead control some seemingly unrelated, ad-hoc variables (see, e.g., \citealt{zhang2016feedback, conitzer2021multiplicative, tashman2020dynamic,  ye2020cold}). To accommodate such controllers, we introduce non-linear response functions to transform the control variables. While standard additive dual-based P-controllers are equivalent to dual subgradrient descent with momentum, we show that general non-linear PID controllers can be represented using CMD. The flexibility of mirror descent allows us to prove performance bounds for general non-linear controllers and, at the same time, show that many ad-hoc controllers are, in fact, dual-based controllers with appropriate non-linear response functions.

\paragraph{Key Technical Challenges.} We now briefly discuss the proof ideas/challenges for establishing the regret bound for generalized CMD. The convolutional operator makes the regret analysis more challenging than in the case of standard online mirror descent because one cannot directly apply the three-point property on the regret to construct the telescoping. One of the key elements of the proof is to decompose the regret of the algorithm against the best static action in hindsight into two distinct terms. The first term depends on the difference between the current iterate and the best static action in hindsight and can be bounded using the three-point property of convex optimization. The second term is unique to CMD as it depends on the difference of the iterates. We bound this term by proving a stability property for CMD. 

We then use our general bound for CMD to provide the first regret bounds for different types of PID controllers. PID controllers can be interpreted as linear filters and, thus, can be naturally analyzed using the theory of linear-time-invariant systems. In particular, we analyze PID controllers by studying their Z-transforms in the frequency domain instead of the time domain. This leads to a novel analysis of the convolutional operators induced by different PID controllers. As a byproduct, we establish here the first regret bound for mirror descent with momentum and optimistic mirror descent for non-smooth convex optimization. 

In the presence of a derivative and integral controller, the linear filter behaves like a harmonic oscillator whose behavior is dictated by the roots of the transfer function. We remark that our analysis is restricted to parameter values for which the transfer function has real roots, which corresponds to the linear filter being an overdamped oscillator (Assumption~\ref{ass:real-roots}). We conjecture that our analysis extends to the case of imaginary roots, which corresponds to the case of the linear filter being an underdamped oscillator. Our numerical results support this conjecture.

{
Our numerical experiments suggest that PID controllers can lead to improvements in performance over a pure P controller. In particular, the highest performance is achieved with a PI controller, thus confirming the benefits of adding momentum. We remark, however, that our strongest theoretical bound is provided for the case of a pure P controller. This is aligned with literature on online convex optimization where highly empirically successful methods do not necessarily have regret bounds better than vanilla subgradient descent (see, e.g., the bounds for ADAM in \citealt{kingma2015adam}).}

\subsection{Related Literature}\label{sec:related}

In this section, we overview the streams of literature that are most closely related and position our work vis-\'a-vis.

\paragraph{PID controllers.} PID controllers are perhaps the most used solutions to practical control problems, and have been implemented in different application domain~\citep{aastrom2006advanced,cominos2002pid,bennett2001past,vinagre2007fractional,panda2012introduction}. In particular, in the context of online allocation problems, PID controllers have been recently employed to handle budget pacing problems in internet advertising markets~\citep{yang2019bid,zhang2016feedback,tashman2020dynamic,ye2020cold}. { Most analyses of PID controllers are focused on linear systems \citep{aastrom2006advanced} and, to the best of our knowledge, ours is the first paper to provide regret bounds for dual-based PI controllers in the context of online allocation problems.

\paragraph{First-order methods for online convex optimization.} Online convex optimization (OCO) studies online decision making with convex loss functions. Some classic online first-order methods include online gradient descent, online mirror descent, regularized-follow-the-leader, and online adaptive methods among others~\citep{hazan2016introduction}. The major difference between online optimization compared to stochastic optimization is that the loss functions can be chosen adversarially in online optimization, while the loss functions in stochastic optimization come from an unknown stochastic i.i.d.~distribution.


In the last decade, there has been a renewed interest in first-order methods as these have been particularly successful in solving challenging optimization problems such as training machine learning prediction methods~\citep{nesterovBook,sra2012optimization,beck2017first} and online allocation problems such as budget pacing in ad markets~\citep{Devanur2019near,gupta2016experts,BalseiroGur2017EC,conitzer2021multiplicative}. 

Online algorithms have achieved great success in machine learning training. In particular, the adaptive gradient methods such as AdaGrad~\citep{duchi2011adaptive} adaptively scale the step-size for each dimension and can speed up the training of many machine learning problems. Later on, \cite{kingma2015adam} proposes ADAM, and its major novelty is to apply the exponential weighted average to gradient estimates, which is equivalent to momentum method for unconstrained problems. While ADAM is one of the most successful algorithms for training neural networks nowadays, \cite{reddi2019convergence} shows that the theoretical analysis of ADAM contains major technical issues and indeed ADAM may not even converge. After that, many variants of ADAM have been proposed~\citep{reddi2019convergence,chen2018closing,huang2018nostalgic}, but most of the analysis requires the momentum parameter $\beta\rightarrow 0$. More recently, \cite{alacaoglu2020new} proposed a simple analysis for variants of ADAM that allows arbitrary momentum parameter $\beta$, but the regret bound is $1/(1-\beta)$ times larger than vanilla OGD.

We contribute to the OCO literature by introducing Convolutional Mirror Descent, a first-order method for online convex optimization, which generalizes mirror descent with momentum and mirror descent with momentum, and providing a regret bound on its performance.
}

\paragraph{Online Allocation.} Online allocation is an important problem in computer science and operations research with wide applications in practice. Many previous works of online allocation focus on the stochastic i.i.d. input model. In particular, \cite{DevanurHayes2009} present a two-phase dual training algorithm for the AdWords problem (a special case of online allocation problem): estimating the dual variables by solving a linear program in the first exploration phase; and taking actions using the estimated dual variables in the second exploitation phase. They show that the proposed algorithm obtains regret of order $O(T^{2/3})$. Soon after, \cite{Feldman2010} present similar two-phases algorithms for more general linear online allocation problems with similar regret guarantees. Later on, \cite{Agrawal2014OR}, \cite{Devanur2019near} and \cite{kesselheim2014primal} propose primal- and/or dual-based algorithms that dynamically update decisions by periodically solving a linear program using all data collected so far, and show that the proposed algorithms have $O(T^{1/2})$ regret, which turns out to be the optimal regret rate in $T$. More recently,~\cite{balseiro2020dual,balseiro2020best,li2020simple} propose simple dual descent algorithms for online allocation problems with stochastic inputs, which attain $O(T^{1/2})$ regret. The algorithm updates dual variables in each period in linear time and avoids solving large auxiliary programs. Dual-based PID controllers fall into this category: the update per iteration can be efficiently computed and there is no need to solve large convex optimization problems.

\section{Dual-Based PID Controller for Online Allocation Problems}

In this section, we introduce the online allocation problem following the standard notation in \cite{balseiro2020best} and a natural dual-based PID controller that is used in practice to solve such problems. We then establish a fundamental connection between dual-based PID controllers and first-order methods for online convex optimization.

\subsection{Online Allocation Problems} \label{sec:online-allocation}
We here consider an online optimization problem with a finite horizon of $T$ time periods and resource constraints. At time $t$, the decision maker receives a request $\gamma_t = (f_t,b_t,\cX_t) \in \mathcal S$ where $f_t: \cX_t\rightarrow\RR_+$ is a non-negative reward function, $b_t:\cX_t\rightarrow\RR_+^m$ is a non-negative resource consumption function, and $\cX_t \subset \RR^d_+$ is a compact set. We denote by $\mathcal S$ the set of all possible requests that can be received. After observing the request, the decision maker takes an action $x_t\in \cX_t\subseteq \RR^d$ that leads to reward $f_t(x_t)$ and consumes $b_t(x_t)$ resources. The total amount of resources is $B\in \RR^m_{+}$ with $B_j > 0$ for all $j$. We denote by $\vgamma = (\gamma_1,\ldots,\gamma_T)$ the vector of inputs  over time $1,\ldots,T$.

{Online allocation problems have a plethora of applications in practice, including capacity allocation problems in airline and hotel revenue management~\citep{bitran2003overview,talluri2006theory,gallego2019revenue}, allocation of advertising opportunities in internet advertising markets~\citep{karp1990optimal,DevanurHayes2009,feldman2009online,Feldman2010}, bidding in repeated auctions with budgets~\citep{BalseiroGur2019MS}, personalized assortment optimization in online retailing marketplaces~\citep{bernstein2015dynamic, golrezaei2014real}, etc.}

Suppose all future information is known to the user, we can formalize the offline problem as
\begin{align}\label{eq:OPT}
\begin{split}
    \OPT(\vec \gamma)=
	\sup_{\vec x: x_t\in \cX_t} & \sum_{t=1}^T f_t(x_t)
	\text{ s.t. }       \sum_{t=1}^T b_t(x_t) \le B\,.
\end{split}
\end{align}
The offline optimum provides an upper bound on reward of any online algorithm as it captures the best possible performance under full information of all requests. Thus, we can benchmark any online algorithm against the reward of the optimal solution when the request sequence $\vgamma$ is known in advance, i.e., the offline optimum. 

\subsection{Dual-Based PID Controller for Online Allocation}

Dual-based algorithms provide a natural approach to solving online optimization problems with global constraints. The basic idea is to consider the Lagrangian dual of \eqref{eq:OPT} by introducing the dual variable $\mu \in \mathbb R_m^+$ for the resource constraints:
\begin{align}\label{eq:Langrangian}
\begin{split}
	\OPT(\vec \gamma)&=\sup_{\vec x: x_t\in \cX_t} \inf_{\mu\ge 0} \left\{ \sum_{t=1}^T f_t(x_t) - \mu^\top      \left(\sum_{t=1}^T b_t(x_t)- B\right) \right\} \\
	& \le \inf_{\mu\ge 0} \left\{  \mu^\top B + \sum_{t=1}^T \sup_{x_t\in \cX_t} \left\{ f_t(x_t)-\mu^\top b_t(x) \right\} \right\}\,,
\end{split}
\end{align}
where the equality follows because it is optimal to let $\mu_j \uparrow \infty$ if $\sum_{t=1}^T b_{t,j}(x_t) > B_j$ and $\mu_j=0$ otherwise, and the inequality from the minimax inequality and using that the objective is separable. The key observation is that the primal variable $x_t$ is separable in the Lagrangian form \eqref{eq:Langrangian}. Suppose the optimal dual variable $\mu^*$ is known to the decision maker, then a natural algorithm to make the primal decision is by setting
$$
x_t=\arg \max_{x_t\in \cX_t} \left\{ f_t(x_t)-(\mu^*)^\top b_t(x) \right\} \ .
$$
That is, we compute the action that maximizes the Lagrangian at each step $t$, given $\mu^*$. In practice, $\mu^*$ is usually unknown to the user, and instead, we can iteratively update the dual variable $\mu_t$ using a PID controller.

\begin{algorithm}[t]
	\SetAlgoLined
	{\bf Input:} Total time periods $T$, initial resources $B_1=B$, proportional term parameter $K_P \ge 0$, integral term parameter $K_I \ge 0$, derivative gain parameter $K_D \ge 0$, exponential averaging term $\beta \in [0,1)$, initial dual solution $\mu_1 \in \mathbb R_+^m$, and increasing, continuous functions $\ell_j : \RR_+ \rightarrow \RR$. \\
	\For{$t=1,\ldots,T$}{
		Receive request $(f_t, b_t, \cX_t)$.\\
		Make the primal decision $x_t$ and update the remaining resources $B_t$:
		\begin{align}
		\tilde{x}_{t} &= \arg\max_{x\in\cX_t}\left\{f_{t}(x)-\mu_{t}^{\top} b_{t}(x)\right\} \ ,\label{eq:primal_decision} \\
		x_{t}&=\left\{\begin{array}{cl}
		\tilde{x}_t    & \text{  if } b_t(\tilde{x}_t)\le B_t  \\
		0
		& \text{ otherwise} 
		\end{array} \right. \ , \nonumber \\
		B_{t+1} &= B_t - b_t(x_t) . \nonumber
		\end{align}\\
		Determine the error by comparing the target and actual consumption:
		$ \  \tg_t = B/T -b_t( x_t)  
		\ .$\\
		
		Update the dual variables using a PID controller with non-linear response:
		{\scriptsize
		\begin{align}\label{eq:non-linear-update}
    \mu_{t+1,j} = \max\Bigg(\ell_j^{-1} \Bigg(  \ell_j\big(\mu_{t,j}\big) -  \underbrace{K_P g_t}_{\substack{\text{proportional}\\\text{term}}} - \underbrace{K_I \sum\nolimits_{s=0}^{t-1} \beta^{s} g_{t-s}}_{\text{integral term}} - \underbrace{K_D (g_t-g_{t-1})}_{\substack{\text{derivative}\\\text{term}}} \Bigg), 0\Bigg)\,,
\end{align}}
(See \eqref{eq:api} and \eqref{eq:mpi} for examples of the non-linear responses, and how they implement well-known controller variants such as additive and multiplicative.)
	 }
	\caption{Dual-Based PID Controller for Online Allocation}
	\label{al:PID}
\end{algorithm}

When resource constraints are binding, the algorithm should aim to deplete resources evenly over time to avoid depleting resources too early and missing good opportunities later in the horizon. The term $B/T$ can be interpreted as the \emph{target consumption}, i.e., how much resources should ideally be consumed on average per time period. The term $b_t(x_t)$ is the \emph{actual consumption}, i.e., how much resources are actually consumed per time period. The difference between these two terms gives the \emph{error} in resource consumption, i.e., $g_t = B/T - b_t(x_t)$. As a result, a natural algorithm that is widely used in practice for online allocation problems is running a PID controller on the dual variable $\mu_t$ based on the error. That is, the controller adjusts the control variables $\mu_t$ to make the error $g_t$ as small as possible. The controller has three terms. The P term is proportional to the current value of error: if the error is positive, we need to reduce the control to decrease the error. The I term takes an exponential average of all past errors, while the D term takes the difference between the two last errors. The PID controller has a coefficient (a.k.a.~gain factor) $K_P$ for the proportional term, a coefficient $K_I$ for the integral term, and a coefficient $K_D$ for the derivative term. These coefficients jointly determine how the errors impact the change in the control variable. The dual-based PID controller algorithm is stated in Algorithm \ref{al:PID} and a block diagram representation is provided in Figure~\ref{fig:PIcontroler}.  Three observations are in order.

Firstly, in many circumstances, it is more appropriate to update the control variables non-linearly. For these reasons, the PID controller takes as input increasing \emph{response} functions $\ell_j: \RR_+ \rightarrow \RR$ for each resource and runs the PID controller on the transformed control variables $\ell_j\big(\mu_{t,j}\big)$, which yields the update rule in \eqref{eq:non-linear-update}. In particular, when the response function is chosen to be the identity function $\ell_j(\mu_j)=\mu_j$, we recover the additive PID controller:
\begin{align}\label{eq:api}
    \mu_{t+1} = \max\Bigg(  \mu_t - \underbrace{K_P g_t}_{\substack{\text{proportional}\\\text{term}}} - \underbrace{K_I \sum\nolimits_{s=0}^{t-1} \beta^{s} g_{t-s}}_{\text{integral term}} - \underbrace{K_D (g_t-g_{t-1})}_{\substack{\text{derivative}\\\text{term}}} , 0\Bigg)\,.
\end{align}
When the response function is chosen to be the logarithm $\ell_j(\mu_j)=\log \mu_j$, we recover the multiplicative PID controller:
\begin{align}\label{eq:mpi}
    \mu_{t+1} = \mu_t \circ \exp\Bigg(-   \underbrace{K_P g_t}_{\substack{\text{proportional}\\\text{term}}} - \underbrace{K_I \sum\nolimits_{s=0}^{t-1} \beta^{s} g_{t-s}}_{\text{integral term}} - \underbrace{K_D (g_t-g_{t-1})}_{\substack{\text{derivative}\\\text{term}}}\Bigg)\, ,
\end{align}
where $\circ$ is the coordinate-wise product.

Secondly, ours is an \emph{incremental} or velocity controller as it adjusts the rate of change of the control variable as opposed to positional controllers, which act directly on the control variable (see, e.g, \citealt[chapter 13.4]{aastrom2006advanced}). Incremental controllers are used in applications, such as motors, in which it is necessary to control the velocity instead of the position of a system. Incremental controllers are natural for online allocation problems because we seek to control the velocity at which resources are consumed, i.e., the rate of consumption. We remark that adjusting the rate of change is common in practice, so we adopt this interpretation (see, e.g., \citealt{zhang2016feedback,smirnov2016online,conitzer2021multiplicative,BalseiroGur2019MS,ye2020cold}). As a result, the P term in our incremental controller acts as an I term in a positional controller, which guarantees that the steady-state error becomes close to zero. 

Thirdly, we utilize an exponential averaging in the integral term. It recovers the traditional integral controller when choosing $\beta=1$. However, one can observe that in the case $\beta=1$, the integral term introduces undamped oscillation of the dual variable $\mu_t$, which can push $\mu_t$ far away from the optimal dual variable $\mu^*$. The exponential average puts more weight on recent error terms, which reduces oscillations over time. In the rest of the paper, we focus on the case when $\beta<1$.


\subsection{Example: Pacing Budgets in Repeated Auctions}\label{sec:BudgetExample}


Consider an advertiser with a budget of $B$ dollars who participates in $T$ repeated auctions. The advertiser is a quasi-linear utility maximizer with a budget constraint that limits the total expenditure over the $T$ auctions. Each request constitutes an auction in which the advertiser can bid to show an ad. In each auction, the advertiser is disclosed some attributes about the advertising opportunity and develops an estimate $v_t$ for the value of showing an ad. Based on this estimate, the advertiser needs to decide on a bid $x_t$. We denote by $f_t(x_t)$ the utility the advertiser derives when the bid is $x_t$ and $b_t(x_t)$ the payment made to the auctioneer. Because the advertiser's problem has a single resource constraint, the algorithm needs to maintain a single parameter: the dual variable $\mu_t$ of the budget constraint.

To simplify the exposition, we focus on second-price auctions, but our analysis extends to other truthful auctions. In a second-price auction, the highest bidder wins, but the winner pays the second-highest bid in the auction. In particular, denoting by $d_t$ the highest competing bid in the auction, the utility of the advertiser is $f_t(x_t) = (v_t - d_t) \mathbf 1\{ x_t \ge d_t\}$ and $b_t(x_t) = d_t \mathbf 1\{ x_t \ge d_t\}$. Notice that the feedback structure does not exactly match that of \eqref{sec:online-allocation} because the advertiser does not know the highest competing bid $d_t$ at the point of bidding. Nevertheless, as showed by \citet{BalseiroGur2019MS}, an optimal bid when the dual variable is $\mu_t$ can be computed as follows:
\begin{equation}\label{eq:bidding-update}
\begin{split}
		\tilde{x}_{t} &= \arg\max_{x\ge0}\left\{f_{t}(x)-\mu_{t}^{\top} b_{t}(x)\right\} =\arg\max_{x\ge0}\left\{ (v_t - (1+\mu_t) d_t) \mathbf 1\{ x_t \ge d_t\} \right\} = \frac {v_t} {1+\mu_t}\,,
\end{split}
\end{equation}
where we used that the second argmax expression is equivalent to that of bidding in an auction with value $v_t / (1+\mu_t)$, and the truthfulness of a second-price auction. In such an auction, the optimal bid is $v_t / (1+\mu_t)$. The dual variable can then be updated additively or multiplicatively using \eqref{eq:api} or \eqref{eq:mpi}, respectively.

The bidding strategy \eqref{eq:bidding-update} has been studied in various papers, including \citet{gummadi2012repeated,BalseiroGur2019MS,yang2019bid}. In many settings, however, bids are computed by multiplying values by a variable factor. The following multiplicative bidding strategy is considered in \cite{zhang2016feedback, conitzer2021multiplicative, tashman2020dynamic,  ye2020cold}:
\begin{equation}\label{eq:alternative-pacing}
    \tilde x_t = \nu_t^{1/q} \cdot v_t\,,
\end{equation}
where $\nu_t \in (0,1]$ is a shading factor and $q \ge 1$ is a fixed power. At first sight, the previous update rule does not seem to conform with the dual-based PID controller in Algorithm~\ref{al:PID}. However, we can leverage the nonlinear response function to reverse engineer an update that yields the desired bidding strategy. Assume, for example, that the shading factor $\nu_t$ is updated multiplicatively using the update 
$$
    \nu_{t+1} = \nu_t \cdot \exp\left(K_P g_t + K_I \sum_{s=0}^{t-1} \beta^{s} g_{t-s}+K_D(g_t - g_{t-1}) \right)\,.
$$
(We changed the sign of the errors because the bid is now increasing in the control variable $\nu_t$.) Some calculus shows that the response function $\ell(s) = q \cdot \log(1+s) - 1$ together with the change of variables $\mu = \nu^{-1/\alpha} - 1$ induces the desired bidding strategy. This illustrates the flexibility of the dual-based PID controller: by carefully designing the response function, we can apply our theory to a large scope of bidding algorithms used in practice, and not just to dual-based PID controllers.


\section{Convolutional Mirror Descent and Connection to PID Controllers}

In this section, we establish a connection between dual-based PID controllers and convolutional mirror descent, a new first-order method based on online mirror descent that takes a weighted average of past gradients. We begin by formally stating the online convex optimization problem, then we introduce convolutional mirror descent, and we conclude by establishing the aforementioned connection.

\subsection{Online Convex Optimization}

Here we consider the online convex optimization problem with $T$ time horizons over a feasible set $\mathcal U \subseteq \RR^m$, which is assumed to be convex and closed. For each time step $t$, an algorithm selects a solution $\mu_t \in \mathcal U$; following this, a convex loss function $w_t :\mathcal U \rightarrow \RR$ is revealed, and the algorithm incurs a cost $w_t(\mu_t)$. We assume that the algorithm has access to subgradients $g_t \in \partial w_t(\mu_t)$. We remark that the subgradients $g_t \in \RR^m$ are observed \emph{after} taking the action. Then, for any $\mu \in \mathcal U$, we denote the regret of the online algorithm against a fixed static action $\mu \in \mathcal U$ by
\[
    \RegretNoArg(\mu) := \sum_{t=1}^{T} w_t(\mu_t) - w_t(\mu)\,.
\]


We here focus on the \emph{adversarial model}, where the functions $w_t(\mu)$ and the $g_t$ subgradients are generated adversarially. The adversary can choose the functions adaptively and after observing the action chosen by the algorithm. We remark that our results are more general than needed to tackle online allocation problems as they handle arbitrary convex functions and convex feasible sets. As such, our analysis can be of independent interest to researchers in the online convex optimization literature.


\subsection{Convolutional Mirror Descent}

\begin{algorithm}[h]
	\SetAlgoLined
	{\bf Input:} feasible set $\mathcal U$, initial solution $\mu_1$, parameter sequence $\{\lambda_i\}_{i \ge 0}$, step-size $\eta$, convex reference function $h:\mathcal U \rightarrow \RR$.\\
	\For{$t=1,\ldots,T$}{
	    Play action $\mu_t$ and receive subgradient $g_t \in \partial w_t(\mu_t)$.\\
		Compute $z_t=\sum_{s\le t} \lambda_{t-s} g_s.$\\
		Compute $\mu_{t+1} = \min_{\mu \in \mathcal U} \left\{ z_t^\top \mu + \frac 1\eta V_h(\mu,\mu_t) \right\}$ where $V_h(x,y)=h(x)-h(y)-\nabla h(y)^\top (x-y)$ is the Bregman divergence of $h$. 
		}
	\caption{Convolutional Mirror Descent}
	\label{al:omd-m}
\end{algorithm}

Algorithm~\ref{al:omd-m} presents our main algorithm in the language of online convex optimization.
The algorithm has three parameters: a step size $\eta \ge 0$, a sequence $\{\lambda_i\}_{i \ge 0}$ with $\lambda_i \in \mathbb R$, and a reference function $h:\mathcal U \rightarrow \RR$. 
The algorithm plays the action $\mu_t$ and receives a subgradient $g_t$. The convolution of the past gradients and the sequence $\lambda_i$ is computed and then the iterate moves in the opposite direction of the convolution. Movement from the incumbent solution is penalized using the Bregman divergence  $V_h(x,y)=h(x)-h(y)-\nabla h(y)^\top (x-y)$ as a notion of distance. Furthermore, the solutions are projected to the feasible set $\mathcal U$ to guarantee the feasibility. The advantage of using the Bregman divergence in the projection step is that, in many cases, it allows one to better capture the geometry of the feasible set and, in some settings, leads to closed-form formulas for the projection. 

We conclude by discussing some instantiations of the algorithm for some common reference functions. First, when the reference function is the squared-Euclidean norm, i.e., $h(\mu)=\frac{1}{2}\|\mu\|_2^2$, the update rule can be written as $\mu_{t+1}=\text{Proj}_{\mathcal U}(\mu_t-\eta z_t)$ where $\text{Proj}_{\mathcal U}(x) = \min_{\mu \in \mathcal U} \| x - \mu \|_2$ denotes the projection of a point $x \in \mathbb R^m$ to the feasible set $\mathcal U$. Second, suppose the feasible set is the unit simplex $\mathcal U = \big\{ \mu \in \RR_+^m : \sum_{j=1}^m \mu_j = 1\big\}$, where $\RR_+$ denotes the set of non-negative real numbers.  In this case, a well-known choice for the reference function is the negative entropy $h(\mu) = \sum_{j=1}^m \mu_j \log(\mu_j)$. When the initial solution lies in the relative interior of the feasible set, the update rule is given by
\[
    \mu_{t+1,j} = \frac{\mu_{t,j} \exp\left( - \eta z_{t,j}\right)}{\sum_{i=1}^m \mu_{t,i} \exp\left( - \eta z_{t,i}\right)}\,.
\]

\subsection{Connection to Dual-Based PID Controllers}\label{sec:connection}

The dual-based PID controller (Algorithm \ref{al:PID}) can be reformulated as a dual first-order method. Notice that the Lagrangian dual problem to the offline problem is
\begin{align}\label{eq:dual}
    \min_{\mu\ge 0} D(\mu | \vgamma ):=\sum_{t=1}^T \pran{f_t^*(\mu)+(B/T)^\top \mu} \ ,
\end{align}
where $f_t^*(\mu):=\sup_{x\in \cX_t} \{f_t(x)-\mu^{\top} b_t(x)\}$ is the optimal opportunity-cost-adjusted reward of request $\gamma_t$. Denote the $t^\text{th}$ term of the dual function by
\begin{align}\label{eq:application-w}
    w_t(\mu):=f_t^*(\mu) + (B/T)^\top \mu\,,
\end{align}
then we have $D(\mu | \vgamma )=\sum_{t=1}^T w_t(\mu)$. Because $f_t^*(\mu)$ is the supremum of linear functions, it is always a convex function of the dual variable $\mu$. A key observation is that the error term used in our PID controller $g_t= B/T -b_t( x_t)$ is also a subgradient to $w_t(\mu)$ at $\mu_t$ whenever $\tilde x_t = x_t$. That is, from Danskin's Theorem we have that $g_t \in \partial w_t (\mu_t)$ and dual-based PID controllers can be interpreted as minimizing the functions $w_t(\mu)$ in \eqref{eq:application-w}. This connection acts as the foundation of the equivalence between dual-based PID controllers and first-order methods, as stated in the next proposition. The proof of Proposition~\ref{prop:PID-equi} is provided in Appendix~\ref{sec:proof:prop:PID-equi}.

\begin{prop}\label{prop:PID-equi}
The dual-based PID controller with non-linear response \eqref{eq:non-linear-update} is equivalent to convolutional mirror descent with feasible set $\mathcal U = \mathbb R_+^m$, reference function $h(\mu) = \sum_{j=1}^m \int_0^{\mu_j} \ell_j(s) ds$, and weights
\begin{align}\label{eq:new-al}
    \begin{split}
    \lambda_0&=1-\alpha_{I}+\alpha_I(1-\beta)\\
    \lambda_1&=-\alpha_D+\alpha_I(1-\beta) \beta \\
    \lambda_{i}&=\alpha_I (1-\beta)\beta^i \text{ for } i \ge 2
    \end{split}
\end{align}
with the step-size equal to $\eta=K_P+K_I/(1-\beta) + K_D \ge 0$ where $\alpha_I = K_I/(\eta (1-\beta))$, and $\alpha_D = K_D/\eta$. 
\end{prop}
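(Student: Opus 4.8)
The plan is to show that the two algorithms generate identical iterates by (i) simplifying the CMD update for the specific separable reference function and identifying it with the nonlinear PID step, and (ii) verifying that the scaled convolution weights reproduce the PID bracket term by term.

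First I would exploit separability. Since $h(\mu) = \sum_{j} \int_0^{\mu_j} \ell_j(s)\,ds$ is a sum of one-dimensional convex functions (convexity holds because each $\ell_j$ is increasing), both the Bregman divergence $V_h$ and the CMD projection decouple across coordinates. Using $\nabla h_j(\mu_j) = \ell_j(\mu_j)$, the coordinate-$j$ subproblem $\min_{u \ge 0}\{ z_{t,j} u + \tfrac1\eta V_h \text{-term}\}$ is convex with derivative $z_{t,j} + \tfrac1\eta(\ell_j(u) - \ell_j(\mu_{t,j}))$. Setting this to zero gives the unconstrained stationary point $u^\star = \ell_j^{-1}\big(\ell_j(\mu_{t,j}) - \eta z_{t,j}\big)$, from which I would conclude that the constrained minimizer is $\mu_{t+1,j} = \max\big(\ell_j^{-1}(\ell_j(\mu_{t,j}) - \eta z_{t,j}),\,0\big)$. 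This is precisely the form of the nonlinear PID update \eqref{eq:non-linear-update}, provided $\eta z_{t,j}$ equals the full PID bracket.

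The second step is the coefficient-matching identity. Re-indexing the convolution via $i = t - s$ gives $z_t = \sum_{i=0}^{t-1} \lambda_i\, g_{t-i}$, so I need $\eta \sum_{i \ge 0} \lambda_i\, g_{t-i} = K_P g_t + K_I \sum_{s=0}^{t-1} \beta^s g_{t-s} + K_D (g_t - g_{t-1})$ to hold as an identity in the (arbitrary) past subgradients. Matching the coefficient of each $g_{t-i}$ reduces the claim to three scalar equations: $\eta \lambda_0 = K_P + K_I + K_D$, $\eta \lambda_1 = K_I \beta - K_D$, and $\eta \lambda_i = K_I \beta^i$ for $i \ge 2$. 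Substituting $\eta = K_P + K_I/(1-\beta) + K_D$, $\alpha_I = K_I/(\eta(1-\beta))$, and $\alpha_D = K_D/\eta$ into the proposed weights \eqref{eq:new-al} and simplifying (for instance $\lambda_0 = 1 - \alpha_I \beta$, hence $\eta\lambda_0 = \eta - K_I\beta/(1-\beta) = K_P + K_I + K_D$) verifies each equation directly.

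The main obstacle I anticipate is not this algebra, which is routine, but the careful handling of the projection onto $\mathbb{R}_+^m$ and the domain of $\ell_j^{-1}$. Because $\ell_j$ need only map $\mathbb{R}_+$ into $\mathbb{R}$, the quantity $\ell_j(\mu_{t,j}) - \eta z_{t,j}$ may fall outside the range of $\ell_j$; I would resolve this by arguing from the sign of the subproblem derivative at the boundary, i.e., the constrained minimizer equals $0$ exactly when $\ell_j(\mu_{t,j}) - \eta z_{t,j} \le \ell_j(0)$, so that the clamp at zero in \eqref{eq:non-linear-update} and the Bregman projection onto $\mathbb{R}_+$ coincide. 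I would also fix the boundary convention $g_0 = 0$ so that the derivative term is well defined at $t=1$, and note that truncating the formally infinite weight sequence at $i = t-1$ is harmless since $z_t$ references only $g_1,\dots,g_t$.
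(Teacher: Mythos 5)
Your proposal is correct and follows essentially the same two-step route as the paper: the same coefficient-matching algebra showing $\eta z_t$ equals the PID bracket (the paper phrases it via $\eta\lambda_0 g_t + \eta\lambda_1 g_{t-1} + \eta\sum_{s\le t-2}\lambda_{t-s}g_s = \eta\alpha_P g_t + \eta\alpha_I e_t + \eta\alpha_D(g_t-g_{t-1})$), and the same coordinatewise reduction of the CMD step to the nonlinear response update, where the paper invokes the equivalent dual-space formulation $\nabla h(\tilde\mu_t)=\nabla h(\mu_t)-\eta z_t$ followed by Bregman projection while you solve the proximal subproblem's first-order conditions directly. Your explicit treatment of the boundary case where $\ell_j(\mu_{t,j})-\eta z_{t,j}$ falls below $\ell_j(0)$, showing the clamp at zero coincides with the Bregman projection onto $\mathbb{R}_+$, is a detail the paper's proof glosses over, and is a welcome addition rather than a deviation.
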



To end this section, we comment on the update rule \eqref{eq:new-al}. First, in Proposition~\ref{prop:PID-equi}, we state the PID controller in terms of the parameters $\alpha_P, \alpha_I, \alpha_D \ge0$. Because  $\alpha_P+\alpha_I + \alpha_D = 1$, these parameters can be readily interpreted as the weights assigned to the proportional, integral, and derivative terms, respectively. Notice that \eqref{eq:new-al} only utilizes the subgradient information $g_t$ to make the update, thus, it is a first-order method for solving the offline dual problem \eqref{eq:dual}. 
In the proof, we show that our choice of reference function yields exactly the update~\eqref{eq:non-linear-update}.  The algorithm first takes a weighted average of the current gradient $g_t$, the exponentially smoothed average of all past gradients $e_t = (1-\beta) \sum_{s=0}^{t-1} \beta^{s} g_{t-s}$, and the difference between the last two gradients. Then, it performs an update using mirror descent. 
When $\alpha_P=1$, we recover vanilla mirror descent. Furthermore, when $\alpha_I=1$, we obtain $z_t=\beta z_{t-1} + (1-\beta)  g_t$, which can be interpreted as adding momentum or inertia to the solution and can be thought of as combining Polyak's heavy ball~\citep{polyak1964some} with mirror descent. In other words, $\alpha_I=1$ recovers online mirror descent with momentum. When $\alpha_I = 0$ and $\alpha_D \in (0,1)$, we obtain optimistic mirror descent. We provided a more detailed analysis of the algorithm in the next section.

\section{Regret Bounds for Convolutional Mirror Descent}\label{sec:regret-bound}


In this section, we first provide a regret bound for convolutional mirror descent under general weights $\lambda_i$ in the traditional online convex optimization setting. We then use this theoretical foundation to analyze dual-based PID controllers.

To analyze the regret bound of the new algorithm, we introduce the following standard assumptions on the reference function $h$ used in the mirror descent update. In the following, we denote the primal norm by $\|\cdot\primalnorm$ and its dual norm by $\|\mu\dualnorm = \max_{y : \| y \| \le 1} \mu^\top y$. 

\begin{ass}[Reference function]\label{ass:h} We assume
 \begin{enumerate}
     \item $h(\mu)$ is either differentiable or essentially smooth \citep{bauschke2001essential} in $\mathcal U$.

     \item $h(\mu)$ is $\sigma$-strongly convex in $\|\cdot\dualnorm$-norm in $\mathcal U$, i.e., $h(\mu_1)\ge h(\mu_2) + \nabla h(\mu_2)^\top (\mu_1-\mu_2) + \frac{\sigma}{2}\|\mu_1-\mu_2\dualnorm^2$ for any $\mu_1,\mu_2\in\mathcal U$.
 \end{enumerate}
 \end{ass}



The next theorem presents our main regret bound on Algorithm~\ref{al:omd-m} for online convex optimization with adversarial inputs. We make no assumptions on the input other than the fact that the subgradients are bounded. In addition, the following result holds for arbitrary primal-dual norm pairs.

Denote by $R$ a lower-triangular Toeplitz matrix with $\{\lambda_i\}_{i \ge 0}$ on its lower diagonals and by $Q=R^{-1}$ its inverse. Since $R$ is a lower-triangular Toeplitz matrix, $Q=R^{-1}$ is also a lower-triangular Toeplitz matrix. That is
\[
R= \begin{bmatrix}
\lambda_0 & 0 & \ddots & 0 & 0 \\
\lambda_1 & \lambda_0 & 0 & \ddots & 0 \\
\lambda_2 & \lambda_1 & \lambda_0 & 0 & \ddots \\
\ddots & \ddots & \ddots & \ddots & 0 \\
\lambda_{T-1} & \ddots & \lambda_2 & \lambda_1 & \lambda_0 
\end{bmatrix}
\quad \text{and} \quad  Q=R^{-1}=\begin{bmatrix}
q_0 & 0 & \ddots & 0 & 0 \\
q_1 & q_0 & 0 & \ddots & 0 \\
q_2 & q_1 & q_0 & 0 & \ddots \\
\ddots & \ddots & \ddots & \ddots & 0 \\
q_{T-1} & \ddots & q_2 & q_1 & q_0 
\end{bmatrix}\,.
\]
The next theorem provides a parametric regret bound on convolutional mirror descent that depends on the matrix $R$ and its inverse $Q$.

\begin{thm}\label{thm:adversarial}
Consider the sequence of convex functions $w_t(\mu)$. Let $g_t \in \partial_\mu w_t(\mu_t)$ be a sub-gradient and let $a_t=q_0+\cdots+q_{T-t}$. Suppose  $\| g_t \primalnorm \le G_1$, $\|z_t\primalnorm\le G_2$, $a_t \ge 0$, and the reference function satisfies Assumption~\ref{ass:h}. Then, the regret of Algorithm~\ref{al:omd-m} satisfies for all $\mu \in \mathcal U$
\begin{align}\label{eq:RFTL}
    &\RegretNoArg(\mu) := \sum_{t=1}^{T} w_t(\mu_t) - w_t(\mu)\\ 
    &\le
    \frac{\eta G_2^2}{2\sigma} \sum_{t=1}^T a_t + \frac{a_1}{\eta} V(\mu,\mu_1) +\frac{1}{\eta}\pran{\max_{1\le s\le T} V(\mu, \mu_s) \sum_{s=2}^T (a_s - a_{s-1})^+} 
    + \sqrt{2} G_1 G_2 \frac{\eta}{\sigma} \sum_{j=1}^T \sum_{k=1}^j k |a_j \lambda_k|\,.\nonumber
\end{align}
\end{thm}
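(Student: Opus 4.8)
The plan is to begin with the usual convexity reduction and then exploit the Toeplitz structure of the convolution to split the regret into a piece that the standard mirror-descent machinery controls and a genuinely new piece governed by iterate stability. Since $g_t\in\partial w_t(\mu_t)$, convexity gives $\RegretNoArg(\mu)\le \sum_{t=1}^T g_t^\top(\mu_t-\mu)$. The difficulty is exactly the one flagged in the paper: the update moves along $z_t$, not $g_t$, so the three-point property cannot be applied to $\sum_t g_t^\top(\mu_t-\mu)$ directly. To bridge this, I would invert the convolution. Writing $z=Rg$ with $R$ lower-triangular Toeplitz, we have $g=Qz$, i.e. $g_t=\sum_{s\le t}q_{t-s}z_s$; substituting, swapping the order of summation, using $\sum_{t=s}^T q_{t-s}=a_s$, and splitting $\mu_t-\mu=(\mu_s-\mu)+(\mu_t-\mu_s)$ yields the exact decomposition
\[
\sum_{t=1}^T g_t^\top(\mu_t-\mu)=\underbrace{\sum_{s=1}^T a_s\, z_s^\top(\mu_s-\mu)}_{\text{aligned term}}+\underbrace{\sum_{s=1}^T\sum_{t=s}^T q_{t-s}\, z_s^\top(\mu_t-\mu_s)}_{\text{stability term}}\,.
\]

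For the aligned term I would invoke the three-point property of the update $\mu_{s+1}=\arg\min_\mu\{z_s^\top\mu+\frac1\eta V(\mu,\mu_s)\}$, which (combining the optimality condition, $\sigma$-strong convexity of $h$ from Assumption~\ref{ass:h}, and a Fenchel--Young step pairing $z_s$ in the primal norm against the movement in the dual norm) gives $z_s^\top(\mu_s-\mu)\le\frac1\eta\big(V(\mu,\mu_s)-V(\mu,\mu_{s+1})\big)+\frac{\eta}{2\sigma}\|z_s\primalnorm^2$. Multiplying by $a_s\ge 0$ (legitimate precisely because of the hypothesis $a_t\ge 0$) and summing, the gradient terms contribute $\frac{\eta G_2^2}{2\sigma}\sum_t a_t$ via $\|z_s\primalnorm\le G_2$, while the Bregman differences, after an Abel summation-by-parts, telescope into $\frac{a_1}{\eta}V(\mu,\mu_1)$ plus $\frac1\eta\sum_{s=2}^T(a_s-a_{s-1})V(\mu,\mu_s)$. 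Bounding each increment by its positive part and factoring out $\max_{s}V(\mu,\mu_s)$ then produces exactly the second and third terms of the claim; the residual boundary term $-\frac{a_T}{\eta}V(\mu,\mu_{T+1})$ is discarded using $a_T=q_0\ge 0$.

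The stability term is the crux and the step I expect to be the main obstacle, as it has no analogue in vanilla mirror descent and is where the convolutional structure genuinely bites. Here I would first establish a one-step stability estimate: the optimality condition for $\mu_{t+1}$ together with strong convexity gives $\|\mu_{t+1}-\mu_t\dualnorm\le\frac{\eta}{\sigma}\|z_t\primalnorm$, whence $\|\mu_t-\mu_s\dualnorm\le\frac{\eta}{\sigma}\sum_{r=s}^{t-1}\|z_r\primalnorm\le\frac{\eta}{\sigma}(t-s)G_2$ by the triangle inequality. Applying $z_s^\top(\mu_t-\mu_s)\le\|z_s\primalnorm\|\mu_t-\mu_s\dualnorm$ then bounds the stability term by a double sum weighted by the gaps $(t-s)$, which already matches the $\frac{\eta}{\sigma}$ scaling and the factor $k=t-s$ appearing in the target. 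The remaining work is coefficient bookkeeping on the Toeplitz data: reverting $z_s$ to the gradients through the weights $\{\lambda_i\}$, invoking $\|g\primalnorm\le G_1$ for one factor while retaining $G_2$ from the movement, and reorganizing the $q$- and $\lambda$-sums into the aggregates $a_j$ produces the final term $\sqrt2\,G_1G_2\frac{\eta}{\sigma}\sum_{j=1}^T\sum_{k=1}^j k\,|a_j\lambda_k|$.

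I expect the genuinely delicate points to be concentrated in this last paragraph: keeping the norm pairing consistent throughout (strong convexity is posited in the dual norm, so $z_s$ and $g_s$ must be paired in the primal norm while iterate movements are measured in the dual norm), tracking the signs of $a_s-a_{s-1}$ through the summation-by-parts, and carrying out the Toeplitz accounting that converts the naive gap-weighted bound into the tight $a_j\lambda_k$ form with the mixed constant $G_1G_2$. The aligned-term analysis, by contrast, is a fairly direct weighting of the classical mirror-descent argument and should go through without surprises.
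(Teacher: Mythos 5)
Your skeleton tracks the paper's proof almost step for step: the convexity reduction, an exact decomposition obtained by inverting the convolution through $Q=R^{-1}$ (your aligned term $\sum_{s} a_s z_s^\top(\mu_s-\mu)$, via $\sum_{t=s}^T q_{t-s}=a_s$, is identical to the paper's, which proves the same identity in Lemma~\ref{lem:decomposition} by matching coefficients), the three-point property followed by Abel summation with the $(a_s-a_{s-1})^+$ accounting and the discarded boundary term $-\frac{a_T}{\eta}V(\mu,\mu_{T+1})$ (cf.~\eqref{eq:second}--\eqref{eq:third}), and the one-step stability estimate (your constant $\eta/\sigma$ is in fact slightly sharper than the $\sqrt{2}\eta/\sigma$ of Lemma~\ref{lemma:diff-iterate}, harmlessly, since the theorem carries the $\sqrt{2}$).

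However, your treatment of the stability term has a genuine gap. The bound you actually execute --- $q_{t-s}\,z_s^\top(\mu_t-\mu_s)\le |q_{t-s}|\,\|z_s\primalnorm\|\mu_t-\mu_s\dualnorm\le \frac{\eta}{\sigma}(t-s)\,G_2^2\,|q_{t-s}|$ --- yields $\frac{\eta G_2^2}{\sigma}\sum_{k=1}^{T-1}k(T-k)|q_k|$, a quantity expressed in the inverse coefficients $q_k$ rather than in $|a_j\lambda_k|$, and it is \emph{not} dominated by the claimed term: for the PD controller ($\lambda_1=-\alpha_D$, $q_k=\alpha_D^k$, $G_2 \le 2G_1$) your bound scales like $T\alpha_D/(1-\alpha_D)^2$ while the theorem's last term scales like $T\alpha_D/(1-\alpha_D)$, so as $\alpha_D\rightarrow 1$ no post-hoc ``coefficient bookkeeping'' can recover the stated inequality --- once Cauchy--Schwartz and the triangle inequality have been applied, the structure needed for the $a_j\lambda_k$ form is destroyed. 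The factor $k$ in the target does not come from the displacement gap $t-s$ in $\mu_t-\mu_s$; it comes from counting one-step increments. You must first rewrite the stability term \emph{exactly} --- telescope $\mu_t-\mu_s=\sum_{l=s+1}^t(\mu_l-\mu_{l-1})$ and re-express $z_s$ in the gradients --- to reach the form of Lemma~\ref{lem:decomposition}, namely $-\sum_{s=1}^T\sum_{t=s+1}^T b_{t,s}\,g_s^\top(\mu_t-\mu_{t-1})$ with $b_{t,s}=\sum_{j=t}^T a_j\lambda_{j-s}$, which hinges on the identity $\sum_{t\ge s}a_t\lambda_{t-s}=1$ (i.e., $\mathbf 1^\top QR=\mathbf 1^\top$). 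Only then do $\|g_s\primalnorm\le G_1$, the one-step movement bound, and the count $\sum_{s=1}^T\sum_{t=s+1}^T\sum_{j=t}^T|a_j\lambda_{j-s}|=\sum_{j=1}^T\sum_{k=1}^j k|a_j\lambda_k|$ (with $k=j-s$ counting the admissible values of $t$) produce the stated term, exactly as in \eqref{eq:first}. Your closing sentence gestures at this rewriting, but it is not a reorganization of the bound you derived --- it replaces it, and it is precisely the crux of the paper's argument.
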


{Before proving the regret bound of CMD, we discuss each term in \eqref{eq:RFTL}. The first two terms are typical of first-order methods such as gradient descent and mirror descent. The first term is proportional to the norm of the gradients and the distance traveled by the algorithm. The second term depends on the distance from the initial solution to the benchmark solution $\mu$. The third and fourth terms depend on the inverse of the weights $\{\lambda_i\}_{i \ge 0}$ and how they decay over time. These terms are zero for P controllers. The third term is zero for controllers for which the sequence $a_t$ is monotonically decreasing, such as PD controllers. For PID controllers the sequence $a_t$ is not necessarily zero nor decreasing, and the third and fourth terms are nonzero. In Section~\ref{sec:analysis-PID}, we study the sequence $a_t$ for different types of controllers and show it is possible to attain $O(T^{1/2})$ regret by choosing step-sizes of the order $\eta \sim T^{-1/2}$.}


\subsection{Proof of Theorem~\ref{thm:adversarial}}

The next three lemmas are useful for the analysis. Their proofs are available in Appendix~\ref{sec:proof_thm}. The first property is that the dual solutions produced by our algorithm are stable in the sense that the change in the multipliers from one period to the next is proportional to the step-size and the norm of $z_t$.

\begin{lem}\label{lemma:diff-iterate}
It holds for any $t\ge1$ that
$\|\mu_{t+1}-\mu_{t}\dualnorm\le \frac{\sqrt{2}}{\sigma} \eta\|z_t\primalnorm \ .$
\end{lem}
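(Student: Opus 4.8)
The plan is to read the bound off the first-order optimality condition of the mirror-descent step in Algorithm~\ref{al:omd-m}, which is the standard engine for stability estimates of this type. Since $\mu_{t+1}$ minimizes the convex function $\mu\mapsto z_t^\top\mu+\frac1\eta V_h(\mu,\mu_t)$ over the convex set $\mathcal U$, and Assumption~\ref{ass:h} ensures $h$ is differentiable at $\mu_{t+1}$ (when $h$ is merely essentially smooth the Bregman projection lands in the interior of the domain of $h$, so $\nabla h(\mu_{t+1})$ is well defined), the variational inequality
\[
\Big\langle z_t+\tfrac1\eta\big(\nabla h(\mu_{t+1})-\nabla h(\mu_t)\big),\ \mu-\mu_{t+1}\Big\rangle\ \ge\ 0
\]
holds for every $\mu\in\mathcal U$. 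I would simply instantiate it at the feasible point $\mu=\mu_t$.

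Setting $\mu=\mu_t$ and rearranging yields
\[
\tfrac1\eta\big\langle \nabla h(\mu_{t+1})-\nabla h(\mu_t),\ \mu_{t+1}-\mu_t\big\rangle\ \le\ \big\langle z_t,\ \mu_t-\mu_{t+1}\big\rangle .
\]
The left-hand side is bounded below using the $\sigma$-strong convexity of $h$ in $\|\cdot\dualnorm$ from Assumption~\ref{ass:h}: summing the strong-convexity inequality with itself after swapping $\mu_{t+1}$ and $\mu_t$ gives the gradient-monotonicity estimate $\langle \nabla h(\mu_{t+1})-\nabla h(\mu_t),\mu_{t+1}-\mu_t\rangle\ge\sigma\|\mu_{t+1}-\mu_t\dualnorm^2$. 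The right-hand side is bounded above by the generalized Cauchy--Schwarz inequality for the primal--dual pair, $\langle z_t,\mu_t-\mu_{t+1}\rangle\le\|z_t\primalnorm\,\|\mu_{t+1}-\mu_t\dualnorm$. Combining these and cancelling one factor of $\|\mu_{t+1}-\mu_t\dualnorm$ (the case $\mu_{t+1}=\mu_t$ being trivial) gives $\|\mu_{t+1}-\mu_t\dualnorm\le\frac\eta\sigma\|z_t\primalnorm$, which is at least as strong as the claimed $\frac{\sqrt2}{\sigma}\eta\|z_t\primalnorm$.

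There is no deep obstacle here; this is a routine stability lemma, and the only care required is bookkeeping. One must keep the primal/dual roles consistent---strong convexity and the iterate gap are both measured in $\|\cdot\dualnorm$ while $z_t$ is measured in $\|\cdot\primalnorm$---so that the pairing $\langle a,b\rangle\le\|a\primalnorm\|b\dualnorm$ is applied in exactly that orientation. The one genuine subtlety is the essentially smooth case (e.g.\ the negative entropy on $\mathbb R_+^m$), where differentiability of $h$ on all of $\mathcal U$ fails; there I would justify the optimality condition through the interiority of the Bregman projection, or, to avoid referring to $\nabla h(\mu_{t+1})$ at all, replace the variational-inequality step by the three-point property of the prox map evaluated at $u=\mu_t$ and then lower-bound both resulting Bregman terms by $\tfrac\sigma2\|\cdot\dualnorm^2$, reaching the same conclusion.
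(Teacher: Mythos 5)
Your proof is correct, and it takes a genuinely different route from the paper's. The paper works through the two-step ``lazy'' representation of the update, $\nabla h(\tmu_t)=\nabla h(\mu_t)-\eta z_t$ followed by a Bregman projection: it first bounds $\|\tmu_t-\mu_t\dualnorm\le \frac{\eta}{\sigma}\|z_t\primalnorm$ via the $\frac{1}{\sigma}$-Lipschitz continuity of $\nabla h^*$ (conjugate duality), then sandwiches $V_h(\mu_t,\tmu_t)$ between $\frac{\sigma}{2}\|\mu_{t+1}-\mu_t\dualnorm^2$ (using the generalized Pythagorean theorem for the Bregman projection) and $\frac{\eta^2}{\sigma}\|z_t\primalnorm^2$; chaining these two estimates is exactly what produces the factor $\sqrt{2}$ in the statement. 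You instead argue directly on the prox formulation: instantiate the variational inequality at $\mu=\mu_t$, lower-bound the left side by strong monotonicity of $\nabla h$ ($\sigma$-strong convexity in $\|\cdot\dualnorm$), upper-bound the right side by the primal--dual Cauchy--Schwarz pairing, and cancel one factor of $\|\mu_{t+1}-\mu_t\dualnorm$. This is shorter, avoids both the conjugate-function machinery and the Pythagorean theorem, and in fact yields the sharper constant $\frac{\eta}{\sigma}\|z_t\primalnorm$, which implies the lemma with room to spare; it also reuses the same first-order condition \eqref{eq:foc-bregman} the paper already establishes for Lemma~\ref{lemma:omd}, so nothing new is needed. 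Your handling of the essentially smooth case (interiority of the Bregman projection, or the three-point-property fallback) matches the care the paper itself takes implicitly; the only thing the paper's longer route buys is the explicit dual-space update \eqref{eq:another_update}, which it reuses elsewhere (e.g., in the proofs of Proposition~\ref{prop:PID-equi} and Lemma~\ref{lemma:iterates}), but for this lemma alone your argument is the more economical one.
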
 

The second lemma provides a fundamental decomposition of the regret bound in two distinct terms, which can be analyzed independently. 

\begin{lem}\label{lem:decomposition}
Let $b_{t,s}=        \sum_{j=t}^T a_j \lambda_{j-s}$. It holds that
\begin{equation}\label{eq:decomposition}
    \sum_{t=1}^T g_t^{\top}(\mu_t-\mu)= \sum_{t=1}^T a_t z_t^\top(\mu_t-\mu)-\sum_{s=1}^T \sum_{t=s+1}^T b_{t,s} g_s^{\top} (\mu_t-\mu_{t-1}) \,.
\end{equation}
\end{lem}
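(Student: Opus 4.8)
The plan is to start from the convolutional expression $\sum_{t=1}^T a_t z_t^\top(\mu_t-\mu)$ that appears on the right-hand side of \eqref{eq:decomposition} and massage it back into the linearized quantity $\sum_{t=1}^T g_t^\top(\mu_t-\mu)$, with the double sum emerging as the correction term. The whole argument is purely algebraic (no three-point or stability property is needed here). First I would substitute the definition $z_t=\sum_{s\le t}\lambda_{t-s}g_s$ and exchange the order of summation so as to group terms by the gradient $g_s$, obtaining $\sum_{t=1}^T a_t z_t^\top(\mu_t-\mu)=\sum_{s=1}^T g_s^\top\sum_{t=s}^T a_t\lambda_{t-s}(\mu_t-\mu)$. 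The inner coefficient $\sum_{t=s}^T a_t\lambda_{t-s}$ is, by definition, exactly $b_{s,s}$.

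Next I would split $\mu_t-\mu=(\mu_s-\mu)+(\mu_t-\mu_s)$ inside the inner sum. The first piece contributes $\sum_{s=1}^T b_{s,s}\,g_s^\top(\mu_s-\mu)$, so the whole decomposition hinges on the identity $b_{s,s}=1$. For the second piece I would telescope $\mu_t-\mu_s=\sum_{\tau=s+1}^t(\mu_\tau-\mu_{\tau-1})$ and swap the order of the $t$ and $\tau$ summations; since the resulting inner sum $\sum_{t=\tau}^T a_t\lambda_{t-s}$ equals $b_{\tau,s}$, this yields (after renaming $\tau\to t$) the double sum $\sum_{s=1}^T\sum_{t=s+1}^T b_{t,s}\,g_s^\top(\mu_t-\mu_{t-1})$. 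Collecting the two pieces and rearranging then gives \eqref{eq:decomposition} directly.

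The crux, and the step I expect to be the main obstacle, is establishing $b_{s,s}=1$. Here I would exploit that $Q=R^{-1}$ with both $R$ and $Q$ lower-triangular Toeplitz, so that $RQ=I$ translates into the convolution identity $\sum_{k=0}^{\ell}\lambda_{\ell-k}q_k=\mathbf 1\{\ell=0\}$ for all $0\le\ell\le T-1$. Writing $a_j=\sum_{k=0}^{T-j}q_k$ and substituting into $b_{s,s}=\sum_{j=s}^T a_j\lambda_{j-s}$, the change of variables $i=j-s$ turns this into the triangular double sum $\sum_{i=0}^{N}\sum_{k=0}^{N-i}\lambda_i q_k$ with $N=T-s$; regrouping by $p=i+k$ gives $\sum_{p=0}^{N}\sum_{i=0}^{p}\lambda_i q_{p-i}=\sum_{p=0}^{N}(\lambda*q)_p$, which the convolution identity collapses to the single surviving term $(\lambda*q)_0=\lambda_0 q_0=1$. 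Everything else reduces to Fubini-type interchanges of finite sums and the telescoping above, so once this identity is in hand the lemma follows immediately.
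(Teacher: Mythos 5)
Your proposal is correct and follows essentially the same route as the paper: both exchange the order of summation to group terms by the gradient $g_s$, telescope the dual increments $\mu_t-\mu_{t-1}$, and reduce everything to the key identity $b_{s,s}=\sum_{t\ge s}a_t\lambda_{t-s}=1$. Your scalar convolution verification of this identity (regrouping by $p=i+k$ and invoking $\sum_{i=0}^{p}\lambda_i q_{p-i}=\mathbf 1\{p=0\}$) is simply the entry-wise form of the paper's one-line matrix argument $\vec a^\top R = \mathbf 1^\top Q R = \mathbf 1^\top$, so the two proofs differ only in notation.
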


The third and final lemma is an extension of the three-point property in convex optimization.
\begin{lem}\label{lemma:omd} It holds for any $t\ge1$ that
\begin{equation*}  
    \langle z_t, \mu_t - \mu \rangle
   \le \frac{\eta}{2\sigma}\|z_t\primalnorm^2  + \frac{1}{\eta} V_h(\mu,\mu_t) - \frac{1}{\eta} V_h(\mu,\mu_{t+1})  \ .
\end{equation*}
\end{lem}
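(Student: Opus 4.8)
The plan is to establish Lemma~\ref{lemma:omd} as the standard three-point (prox) inequality for a single mirror-descent step, specialized to the linearized update $\mu_{t+1}=\arg\min_{\mu\in\mathcal U}\{z_t^\top\mu+\frac1\eta V_h(\mu,\mu_t)\}$. The entire argument runs through the first-order optimality condition of this convex subproblem together with the algebraic three-point identity for Bregman divergences, and it closes with strong convexity and Young's inequality.

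First I would write down the optimality condition for $\mu_{t+1}$. Since $\mu_{t+1}$ minimizes the convex objective $z_t^\top\mu+\frac1\eta V_h(\mu,\mu_t)$ over the convex set $\mathcal U$, the variational inequality gives $\langle z_t+\frac1\eta(\nabla h(\mu_{t+1})-\nabla h(\mu_t)),\,\mu-\mu_{t+1}\rangle\ge0$ for all $\mu\in\mathcal U$, hence
\[
    \langle z_t,\mu_{t+1}-\mu\rangle\le \tfrac1\eta\langle \nabla h(\mu_{t+1})-\nabla h(\mu_t),\,\mu-\mu_{t+1}\rangle\,.
\]
Next I invoke the three-point identity $\langle \nabla h(\mu_{t+1})-\nabla h(\mu_t),\mu-\mu_{t+1}\rangle=V_h(\mu,\mu_t)-V_h(\mu,\mu_{t+1})-V_h(\mu_{t+1},\mu_t)$, which is immediate from the definition of $V_h$, yielding
\[
    \langle z_t,\mu_{t+1}-\mu\rangle\le \tfrac1\eta\big(V_h(\mu,\mu_t)-V_h(\mu,\mu_{t+1})-V_h(\mu_{t+1},\mu_t)\big)\,.
\]

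Then I decompose the target quantity as $\langle z_t,\mu_t-\mu\rangle=\langle z_t,\mu_t-\mu_{t+1}\rangle+\langle z_t,\mu_{t+1}-\mu\rangle$ and bound each piece. For the first summand I use the generalized Cauchy--Schwarz (Hölder) inequality with the dual pairing, $\langle z_t,\mu_t-\mu_{t+1}\rangle\le\|z_t\primalnorm\|\mu_t-\mu_{t+1}\dualnorm$, which is valid because $\|\cdot\dualnorm$ is the dual norm of $\|\cdot\primalnorm$; for the second summand I substitute the bound just derived. Strong convexity (Assumption~\ref{ass:h}) gives $V_h(\mu_{t+1},\mu_t)\ge\frac\sigma2\|\mu_{t+1}-\mu_t\dualnorm^2$, so after combining I am left with the expression $\|z_t\primalnorm\|\mu_t-\mu_{t+1}\dualnorm-\frac{\sigma}{2\eta}\|\mu_{t+1}-\mu_t\dualnorm^2+\frac1\eta(V_h(\mu,\mu_t)-V_h(\mu,\mu_{t+1}))$. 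Finally, Young's inequality in the form $ab-\frac{\sigma}{2\eta}b^2\le\frac{\eta}{2\sigma}a^2$ with $a=\|z_t\primalnorm$ and $b=\|\mu_{t+1}-\mu_t\dualnorm$ absorbs the displacement terms and produces exactly the claimed bound.

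The main obstacle is making the optimality condition in terms of $\nabla h$ rigorous when $h$ is merely essentially smooth rather than differentiable on all of $\mathcal U$. There I need to argue that the minimizer $\mu_{t+1}$ lies in the region where $\nabla h$ exists; this is a standard consequence of essential smoothness, since the gradient blows up toward the boundary of the domain and therefore the minimizer cannot sit on that boundary, so the gradient-based stationarity condition and the three-point identity both apply. The remaining bookkeeping---tracking which quantities are measured in $\|\cdot\primalnorm$ versus $\|\cdot\dualnorm$ so that Hölder correctly pairs $z_t$ (in the primal norm) with the displacement (in the dual norm) and strong convexity is invoked in the $\|\cdot\dualnorm$-norm---is routine but must be kept consistent throughout.
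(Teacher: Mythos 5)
Your proposal is correct and follows essentially the same route as the paper's proof: the variational inequality \eqref{eq:foc-bregman} for the prox step, the decomposition $\langle z_t,\mu_t-\mu\rangle = \langle z_t,\mu_t-\mu_{t+1}\rangle + \langle z_t,\mu_{t+1}-\mu\rangle$, the three-point identity, strong convexity of $h$ in the $\|\cdot\dualnorm$-norm, and Young's inequality paired with Cauchy--Schwarz to absorb the displacement term. Your added remark on why the optimality condition remains valid under essential smoothness (the minimizer cannot sit where $\nabla h$ fails to exist) is a point the paper leaves implicit, but it does not change the argument.
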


\begin{proof}[Proof of Theorem \ref{thm:adversarial}]

It follows from convexity of $w_t(\mu)$ and Lemma~\ref{lem:decomposition} that
    \begin{align*}
        \sum_{t=1}^{T} w_t(\mu_t) - w_t(\mu) \le \sum_{t=1}^T g_t^{\top}(\mu_t-\mu)= \underbrace{\sum_{t=1}^T a_t z_t^\top(\mu_t-\mu)}_{S_1}-\underbrace{\sum_{s=1}^T \sum_{t=s+1}^T b_{t,s} g_s^{\top} (\mu_t-\mu_{t-1})}_{S_2}\ .
    \end{align*}
We now bound each term at a time. For the second term, we have
\begin{align}\label{eq:first}
\begin{split}
    |S_2| &\le \sum_{s=1}^T \sum_{t=s+1}^T |b_{t,s} g_s^{\top} (\mu_t-\mu_{t-1})|
    \le G_1  \max_{1\le l\le T}\|\mu_l-\mu_{l-1}\dualnorm\sum_{s=1}^T \sum_{t=s+1}^T |b_{t,s}| \\
    &\le G_1  \max_{1\le l\le T}\|\mu_l-\mu_{l-1}\dualnorm\sum_{s=1}^T \sum_{t=s+1}^T \sum_{j=t}^T |a_j \lambda_{j-s}| \\
    &=G_1  \max_{1\le l\le T}\|\mu_l-\mu_{l-1}\dualnorm \sum_{j=1}^T \sum_{k=1}^j k |a_j \lambda_k| 
    \le \sqrt{2} G_1 G_2 \frac{\eta}{\sigma} \sum_{j=1}^T \sum_{k=1}^j k |a_j \lambda_k|\ ,
\end{split}
\end{align}
where the first inequality follows from the triangle inequality, the second from Cauchy-Schwartz and $\|g_s\primalnorm\le G_1$, and the last inequality uses Lemma \ref{lemma:diff-iterate}.

We now move to the first term. By Lemma~\ref{lemma:omd}, we have 
\begin{align*}
    &z_t^{\top}(\mu-\mu_t) + \frac{1}{\eta} V_h(\mu, \mu_t) \ge  z_t^{\top}(\mu_{t+1}-\mu_t) + \frac{1}{\eta} V_h(\mu, \mu_{t+1}) + \frac{1}{\eta} V_h(\mu_t, \mu_{t+1}) \\
    \ge & z_t^{\top}(\mu_{t+1}-\mu_t) + \frac{1}{\eta} V_h(\mu, \mu_{t+1}) + \frac{\sigma}{2 \eta}\|\mu_{t+1}-\mu_t\dualnorm^2 \ge -\frac{\eta}{2\sigma}  \|z_t\primalnorm^2 + \frac{1}{\eta} V_h(\mu, \mu_{t+1})\ .
\end{align*}
Notice that $\|z_t\primalnorm\le G_2$, and we thus have because $a_t \ge 0$
\begin{align}\label{eq:second}
    S_1 = \sum_{t=1}^T a_t z_t^\top(\mu_t-\mu) \le \frac{\eta G_2^2}{2\sigma} \sum_{t=1}^T a_t + \frac{1}{\eta} \underbrace{\sum_{t=1}^T a_t\pran{V(\mu,\mu_t)-V(\mu,\mu_{t+1})}}_{S_1'}\ .
\end{align}
Furthermore, we have
\begin{align}\label{eq:third}
\begin{split}
    S_1'=&a_1 \sum_{t=1}^T \pran{V(\mu,\mu_t)-V(\mu,\mu_{t+1})} + \sum_{t=1}^T (a_t-a_1) \pran{V(\mu,\mu_t)-V(\mu,\mu_{t+1})} \\
    =& a_1 (V(\mu,\mu_1)-V(\mu,\mu_{T+1}))+ \sum_{t=1}^T\sum_{s=2}^t (a_s-a_{s-1}) \pran{V(\mu,\mu_t)-V(\mu,\mu_{t+1})} \\
    =&a_1 (V(\mu,\mu_1)-V(\mu,\mu_{T+1}))+ \sum_{s=2}^T\sum_{t=s}^T (a_s-a_{s-1}) \pran{V(\mu,\mu_t)-V(\mu,\mu_{t+1})} \\
    =&a_1 (V(\mu,\mu_1)-V(\mu,\mu_{T+1}))+ \sum_{s=2}^T (a_s-a_{s-1}) \pran{V(\mu,\mu_s)-V(\mu,\mu_{T+1})} \\
    =&a_1 (V(\mu,\mu_1)-V(\mu,\mu_{T+1}))- (a_T-a_1) V(\mu,\mu_{T+1}) + \sum_{s=2}^T (a_s-a_{s-1}) V(\mu,\mu_s) \\
    \le& a_1 V(\mu,\mu_1)- a_T V(\mu, \mu_{T+1}) + \max_{1\le s\le T} V(\mu, \mu_s) \sum_{s=2}^T (a_s - a_{s-1})^+\,,
\end{split}
\end{align}
where the second equation follows from $a_t - a_1 = \sum_{s=2}^t (a_s - a_{s-1})$, the third equation follows from exchanging the order of summation, the fourth from telescoping, and the inequality because Bregman divergences are non-negative. The result follows from combining \eqref{eq:first}, \eqref{eq:second}, and \eqref{eq:third}.
\end{proof}

\subsection{Analysis of PID Controllers}\label{sec:analysis-PID}

In order to utilize Theorem \ref{thm:adversarial}, we need to compute the value of $q_t$ from $\lambda_t$. {The next lemma provides a principal way for such computation using Z-transforms. Before stating the lemma, we provide some useful definitions. Let $z[a]=\sum_{t=0}^{\infty} a_t z^t$ and $z[b]=\sum_{t=0}^{\infty} b_t z^t$ be the Z-transforms of the sequences $(a_t)_{t\ge0}$ and  $(b_t)_{t\ge0}$, respectively. We interpret $z[a]$ and $z[b]$ as formal power series, which allows us to manipulate infinite series without worrying about convergence~\citep{niven1969formal}.  
We say that $z[a]\newequal z[b]$ if the coefficients of $z^t$ for $t \ge 0$ match, i.e., $a_t = b_t$ for all $t\ge0$. Moreover, we say $z[a] \newequalT b$ if the coefficients of $1, z, z^2,\ldots,z^{T-1}$ in these two formal power series are the same, i.e., $a_0=b_0, \ldots, a_{T-1}=b_{T-1}$.}

\begin{lem}\label{lemma:transform}
    Suppose $\lambda_0\not=0$. Denote $z[\lambda]=\sum_{t=0}^{T-1} \lambda_t z^t$ and $z[q]=\sum_{t=0}^{T-1} q_t z^t$ as two polynomial in $z$. Then, we have
    $$
    z[q]\newequalT\frac{1}{z[\lambda]}\,,
    $$
\end{lem}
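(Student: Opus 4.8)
The plan is to exploit the relation $Q = R^{-1}$ directly, translating the matrix identity $RQ = I$ into a statement about the convolution of the diagonal sequences $\{\lambda_i\}$ and $\{q_i\}$, and then recognizing that convolution as a product of Z-transforms. The whole argument lives in the ring of formal power series, so no convergence issues arise.

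First I would write the entries of the two matrices in their Toeplitz parametrization, $R_{ij} = \lambda_{i-j}$ and $Q_{ij} = q_{i-j}$ for $i \ge j$ (and $0$ for $i < j$). Multiplying them gives, for $n = i-j \in \{0,\ldots,T-1\}$,
\[
    (RQ)_{ij} = \sum_{l=0}^{n} \lambda_{n-l}\, q_l = (\lambda * q)_n\,,
\]
which depends only on $n$. This re-derives the standard fact that the product of two lower-triangular Toeplitz matrices is again lower-triangular Toeplitz, with diagonal sequence equal to the discrete convolution of the two diagonal sequences.

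Since $Q = R^{-1}$, we have $RQ = I$, hence $(RQ)_{ij} = \delta_{ij}$. As $i,j$ range over $1,\ldots,T$, the index $n = i-j$ covers $0,\ldots,T-1$, and therefore $(\lambda * q)_n = \delta_{n,0}$ for every $n \in \{0,\ldots,T-1\}$. Because convolution of sequences corresponds to multiplication of generating functions, the coefficient of $z^n$ in the polynomial product $z[\lambda]\cdot z[q]$ equals $(\lambda * q)_n$ for each $n \le T-1$; consequently $z[\lambda]\cdot z[q] \newequalT 1$.

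Finally, since $\lambda_0 \neq 0$, the power series $z[\lambda]$ admits a multiplicative inverse $1/z[\lambda]$ in the ring of formal power series. Multiplying the relation $z[\lambda]\cdot z[q] \newequalT 1$ by this inverse yields $z[q] \newequalT 1/z[\lambda]$, using that multiplying an $\newequalT$ identity by a fixed power series preserves agreement of the first $T$ coefficients (the $n$-th coefficient of a product depends only on coefficients of index at most $n$). I do not expect a genuine obstacle here; the only point requiring care is the bookkeeping of the truncation. The finite matrices $R, Q$ constrain the convolution only for indices $0 \le n \le T-1$, and the higher-order coefficients of $z[\lambda]\cdot z[q]$ (those of $z^T,\ldots,z^{2T-2}$) are generally nonzero — which is precisely why the conclusion is stated with $\newequalT$ rather than $\newequal$.
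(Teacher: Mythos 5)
Your proof is correct and takes essentially the same approach as the paper's: both translate $RQ=I$ into the convolution identity $\sum_{i=0}^{k}\lambda_i q_{k-i}=\delta_{k,0}$ for $k=0,\ldots,T-1$ (the paper via the representation $R=\sum_{t}\lambda_t D^t$ with the shift matrix $D$ and $D^T=0$, you via the entrywise Toeplitz computation) and then invert $z[\lambda]$ in the ring of formal power series using $\lambda_0\neq 0$. Your closing observation that multiplying a $\newequalT$-identity by a fixed power series preserves agreement of the first $T$ coefficients is correct and, if anything, makes the final truncation step slightly more explicit than in the paper.
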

\begin{proof}
     Denote by $D$ the subdiagonal shift matrix, i.e., $D_{i,j} = 1$ if $j = i-1$ and zero otherwise. 
Then, we have $R=\sum_{t=0}^{T-1} \lambda_t D^t$, and $R^{-1}=\sum_{t=0}^{T-1} q_t D^t$. By noticing $RR^{-1}=I$, we have
\begin{align}\label{eq:identity}
    I=\pran{\sum_{t=0}^{T-1} \lambda_t D^t}\pran{\sum_{t=0}^{T-1} q_t D^t}
    = \sum_{k=0}^{2T - 2} D^k \sum_{i=0}^k \lambda_i q_{k-i}
    = \sum_{k=0}^{T-1} D^k \sum_{i=0}^k \lambda_i q_{k-i}\,,
\end{align}
where the last equation follows because $D^T=0$. Therefore, we know that $q_0=\frac{1}{\lambda_0}$, and the coefficient of $D^k$ in the RHS of \eqref{eq:identity} must be 0 for $k=1,2,...,T-1$. This shows that $z[q] z[\lambda] \newequal \sum_{k=0}^{2T - 2} z^k \sum_{i=0}^k \lambda_i q_{k-i}  \newequalT \sum_{k=0}^{T-1} z^k \sum_{i=0}^k \lambda_i q_{k-i} \newequal 1$, which finishes the proof by noticing $\lambda_0\not=0$.
\end{proof}

The next four corollaries presents the regret bound for four algorithms: P controller (mirror descent), PD controller (mirror descent with optimism), PI controller (mirror descent with momentum) and PID controller (mirror descent with optimism and momentum).

\begin{cor}\label{cor:P}
    \textbf{P controller (mirror descent).}  We consider P controller where $\lambda_0=1$, $\lambda_t=0$ for $t=1,...,T-1$, thus $q_0=1$, $q_t=0$ for $t=1,...,T-1$, and $a_1=a_2=...=a_T=1$. Furthermore, we have $G_2=G_1$. It follows from Theorem \ref{thm:adversarial} that
$$
\Regret{\mu}\le \frac{\sqrt{2} \eta T G_1^2}{2\sigma} + \frac{1}{\eta} V(\mu, \mu_1)\ .
$$
\end{cor}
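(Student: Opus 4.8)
The plan is to obtain the corollary as a direct specialization of the master bound in Theorem~\ref{thm:adversarial}, since the P controller is the degenerate case in which the convolution collapses to the identity. First I would pin down the four ingredients that enter \eqref{eq:RFTL}: the weights $\lambda_i$, the inverse weights $q_i$, the partial sums $a_t$, and the constant $G_2$. For the P controller the weight sequence is $\lambda_0=1$ and $\lambda_i=0$ for $i\ge1$, so the lower-triangular Toeplitz matrix $R$ is the identity, whence $Q=R^{-1}=I$ and $q_0=1$, $q_i=0$ for $i\ge1$. (Equivalently, Lemma~\ref{lemma:transform} gives $z[q]\newequalT 1/z[\lambda]=1$.) Consequently $a_t=q_0+\cdots+q_{T-t}=1$ for every $t$, which simultaneously verifies the hypothesis $a_t\ge0$ and the stated values $a_1=\cdots=a_T=1$. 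Moreover $z_t=\sum_{s\le t}\lambda_{t-s}g_s=g_t$, so $\|z_t\|=\|g_t\|\le G_1$ and we may take $G_2=G_1$.

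Next I would substitute these values term by term into \eqref{eq:RFTL}. The first term becomes $\frac{\eta G_2^2}{2\sigma}\sum_{t=1}^T a_t=\frac{\eta T G_1^2}{2\sigma}$, and the second becomes $\frac{a_1}{\eta}V(\mu,\mu_1)=\frac{1}{\eta}V(\mu,\mu_1)$. The crucial simplification is that the two terms unique to CMD both vanish. The third term carries the factor $\sum_{s=2}^T(a_s-a_{s-1})^+$, which is zero because $a_t$ is constant; and the fourth term carries the factor $\sum_{j=1}^T\sum_{k=1}^j k\,|a_j\lambda_k|$, which is zero because every $\lambda_k$ with $k\ge1$ vanishes while the inner sum ranges only over $k\ge1$. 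Collecting the two surviving contributions yields the claimed bound.

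I do not expect a substantive obstacle here: the corollary is essentially a consistency check that CMD recovers vanilla mirror descent, for which \eqref{eq:RFTL} should reduce to the textbook form. The only point demanding a line of justification is the vanishing of the third and fourth (``convolutional'') terms, and this follows immediately from the constancy of $a_t$ together with the degenerate weight sequence. Balancing the two surviving terms by choosing $\eta\sim T^{-1/2}$ then gives regret of order $\sqrt{T}$, as expected for mirror descent.
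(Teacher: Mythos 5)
Your proposal is correct and takes essentially the same route as the paper: identify $R = Q = I$ (equivalently $z[q] \newequalT 1$ via Lemma~\ref{lemma:transform}), so that $q_0 = 1$, $a_t \equiv 1$, and $z_t = g_t$ gives $G_2 = G_1$, after which the two convolutional terms in \eqref{eq:RFTL} vanish and Theorem~\ref{thm:adversarial} yields the bound. Note only that your substitution in fact produces the slightly sharper first term $\frac{\eta T G_1^2}{2\sigma}$, so the factor $\sqrt{2}$ appearing in the corollary's statement is harmless slack rather than something your argument needs to recover.
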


{
In the case of a P controller, we have that $R = \operatorname{diag}(\mathbf 1) = Q$. Therefore, $q_0 = 1$ and $q_t = 0$ for $t \ge 1$. Moreover, $a_t=q_0+\cdots+q_{T-t} = q_0 = 1$. The result follows from Theorem~\ref{thm:adversarial}. Our result matches, up to constant factors, the traditional regret bound of online mirror descent. Moreover, setting a step-size $\eta \sim T^{-1/2}$ would lead to regret $\Regret{\mu} = O(T^{1/2})$.} 

\begin{cor}\label{cor:PD}
    \textbf{PD controller (mirror descent with optimism).} Consider a PD controller, where 
\begin{equation*}
    \lambda_0=1, \lambda_1=-\alpha_D>-1, \lambda_2=...=\lambda_{T-1}=0\ .
\end{equation*}
Then, we have
$$
\Regret{\mu}\le \frac{\eta T G_2^2}{2\sigma} \frac{1}{1-\alpha_D} + \frac{1}{\eta(1-\alpha_D)} V(\mu, \mu_1) + \sqrt{2} G_1G_2 \frac{T \eta}{\sigma (1-\alpha_D)} \ .
$$
\end{cor}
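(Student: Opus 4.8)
The plan is to specialize Theorem~\ref{thm:adversarial} to the PD weights, so that all of the work reduces to computing the auxiliary sequences $\{q_t\}$ and $\{a_t\}$ and then evaluating the four terms on the right-hand side of \eqref{eq:RFTL}. First I would invoke Lemma~\ref{lemma:transform}: here $z[\lambda] = 1 - \alpha_D z$, whose formal reciprocal is the geometric series $1/(1-\alpha_D z) = \sum_{t \ge 0} \alpha_D^t z^t$, and matching coefficients gives $q_t = \alpha_D^t$ for $t = 0,\ldots,T-1$. Summing a finite geometric series then yields $a_t = \sum_{i=0}^{T-t} \alpha_D^i = (1 - \alpha_D^{T-t+1})/(1-\alpha_D)$. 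Since $0 \le \alpha_D < 1$ (from $\lambda_1 = -\alpha_D > -1$ together with $K_D \ge 0$), every $a_t$ is strictly positive, which verifies the hypothesis $a_t \ge 0$ required by the theorem.

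Next I would substitute these into the four terms of \eqref{eq:RFTL}. For the first term I would bound $\sum_{t=1}^T a_t \le T/(1-\alpha_D)$, simply discarding the negative $\alpha_D^{T-t+1}$ correction; for the second term I would use $a_1 = (1-\alpha_D^T)/(1-\alpha_D) \le 1/(1-\alpha_D)$. The third term is where the PD structure pays off: a one-line computation gives $a_s - a_{s-1} = -\alpha_D^{T-s+1} \le 0$, so the sequence $\{a_s\}$ is monotonically nonincreasing and each $(a_s - a_{s-1})^+$ vanishes, annihilating the third term entirely. This is exactly the behavior anticipated in the remark following Theorem~\ref{thm:adversarial}.

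For the fourth term I would exploit the sparsity of $\{\lambda_k\}$: among $k \ge 1$ only $\lambda_1 = -\alpha_D$ is nonzero, so $\sum_{k=1}^j k|\lambda_k| = \alpha_D$ for every $j \ge 1$, whence $\sum_{j=1}^T \sum_{k=1}^j k|a_j \lambda_k| = \alpha_D \sum_{j=1}^T a_j \le \alpha_D T/(1-\alpha_D) \le T/(1-\alpha_D)$, where the final inequality just uses $\alpha_D \le 1$ to reach the stated (slightly loose) bound. Assembling the first, second, and fourth contributions—with the third equal to zero—then reproduces the claimed inequality verbatim.

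There is no deep obstacle here, since the statement is a direct corollary; the only thing to watch is bookkeeping. The single genuinely non-mechanical observation is that the reciprocal of the affine polynomial $z[\lambda]$ is a clean geometric series, which is precisely what renders both the partial sums $a_t$ and the sign of $a_s - a_{s-1}$ available in closed form. Once $q_t = \alpha_D^t$ is in hand, the monotonicity that kills the third term and the sparsity that collapses the fourth term both follow immediately.
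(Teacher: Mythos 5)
Your proposal is correct and follows essentially the same route as the paper: both invoke Lemma~\ref{lemma:transform} to read off $q_t = \alpha_D^t$ from the geometric series for $1/(1-\alpha_D z)$, observe that $a_t$ is positive and monotonically decreasing so the third term of \eqref{eq:RFTL} vanishes, and bound the remaining terms via $a_t \le 1/(1-\alpha_D)$ and the sparsity of $\{\lambda_k\}$. Your closed-form expressions for $a_t$ and $a_s - a_{s-1}$ are a slightly more explicit version of the paper's bookkeeping, but the argument is the same.
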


{In the case of a PD controller, we can use Lemma~\ref{lemma:transform} to characterize inverse matrix $Q$. Here, we have that $z[\lambda] = \lambda_0 + \lambda_1 z$. Therefore, $z[q] \newequalT 1/z[\lambda] \newequal 1/(1-\alpha_D z)  \newequal \sum_{i=0}^{T-1} \alpha_D^i z^i $, where the last equation follows from the geometric series for formal power series. Therefore, we have $q_0=1$ and $q_t=\alpha_D^t>0$ for $t \ge0$. Furthermore, we have $a_t=q_0+\cdots+q_{T-t}$ is positive and monotonically decreasing in $t$. Furthermore, we have $a_t\le a_1\le \sum_{s=1}^\infty \alpha_D^s = 1/(1-\alpha_D)$, and
$$
\sum_{j=1}^T \sum_{k=1}^j k |a_j \lambda_k|=\sum_{j=1}^T |a_j \lambda_1| \le T |\lambda_1| a_1 \le \frac{T\alpha_D}{1-\alpha_D} \ .
$$
Thus, the corollary follows from Theorem \ref{thm:adversarial}. Whenever $\alpha_D < 1$, we obtain that a step-size $\eta \sim T^{-1/2}$ would lead to regret $\Regret{\mu} = O(T^{1/2})$.}


\begin{cor}\label{cor:PI} \textbf{PI controller (mirror descent with momentum)}
    We consider PI controller where \begin{align*}
    \lambda_0=1-\alpha_{I}+\alpha_I(1-\beta),
    \lambda_{i}=\alpha_I (1-\beta)\beta^i \text{ for } i=1,...,T-1\ ,
\end{align*}
with $0<\alpha_I, \beta<1$. Then we have
$$
\Regret{\mu}\le \frac{\eta G_2^2 T }{2\sigma \lambda_0}  + \frac{1}{\eta \lambda_0} V(\mu,\mu_1) +\frac{\beta-s}{\eta (1-s)} \max_{1\le s\le T} V(\mu, \mu_s) + \sqrt{2} G_1 G_2 \frac{\eta}{\sigma} \frac{\alpha_I T \beta}{\lambda_0(1-\beta)}\,\ ,
$$
where $s=\frac{(1-\alpha_I)\beta}{\lambda_0}$.
\end{cor}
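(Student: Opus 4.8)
The plan is to follow exactly the template established in Corollaries~\ref{cor:P} and \ref{cor:PD}: invert the weight sequence $\{\lambda_i\}$ via Lemma~\ref{lemma:transform} to obtain the sequence $\{q_i\}$ in closed form, form the partial sums $a_t = q_0 + \cdots + q_{T-t}$, and substitute the resulting quantities into the four terms of Theorem~\ref{thm:adversarial}. First I would write $\lambda_0 = 1 - \alpha_I\beta$ and $\lambda_i = \alpha_I(1-\beta)\beta^i$ for $i\ge1$, and compute the generating function as a formal power series via the geometric series:
\[
z[\lambda] = \lambda_0 + \alpha_I(1-\beta)\frac{\beta z}{1-\beta z} = \frac{\lambda_0(1 - s z)}{1 - \beta z}\,,
\]
where $s = (1-\alpha_I)\beta/\lambda_0$ emerges after collecting the coefficient of $z$ and simplifying with $\lambda_0 = 1-\alpha_I\beta$. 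By Lemma~\ref{lemma:transform}, $z[q] \newequalT 1/z[\lambda] \newequal (1-\beta z)/\big(\lambda_0(1-sz)\big)$, and expanding again as a geometric series yields $q_0 = 1/\lambda_0$ and $q_i = s^{i-1}(s-\beta)/\lambda_0$ for $i \ge 1$.

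The decisive structural facts to establish next are that $0 < s < 1$ and $s < \beta$. Both follow from $0<\alpha_I,\beta<1$: since $\beta(1-\alpha_I) < 1 - \alpha_I\beta = \lambda_0$ one gets $s<1$, and a short computation gives $s-\beta = -\alpha_I\beta(1-\beta)/\lambda_0 < 0$, hence $s<\beta$. Consequently $q_i < 0$ for every $i \ge 1$, which is the essential qualitative difference from the PD case where the tail $q_i$ were positive. I would then evaluate
\[
a_t = \frac{1}{\lambda_0}\left(1 + (s-\beta)\frac{1-s^{T-t}}{1-s}\right)
\]
and verify three properties: that $a_t \ge 0$ (the hypothesis required by Theorem~\ref{thm:adversarial}, which must be checked because the negative $q_i$ could in principle drive $a_t$ below zero); that $a_t$ is \emph{increasing} in $t$, since $a_t - a_{t-1} = -q_{T-t+1} > 0$; and that $a_t \le a_T = 1/\lambda_0$.

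Given these facts the four terms of Theorem~\ref{thm:adversarial} fall out directly. The first and second terms use $\sum_{t=1}^T a_t \le T/\lambda_0$ and $a_1 \le 1/\lambda_0$. For the third term, monotonicity of $a_t$ makes each $(a_s-a_{s-1})^+ = a_s - a_{s-1}$, so the sum telescopes to $a_T - a_1 = \frac{(\beta-s)(1-s^{T-1})}{\lambda_0(1-s)}$, giving a contribution of order $\frac{\beta-s}{1-s}\max_s V(\mu,\mu_s)$; note this term is nonzero here, unlike for P and PD controllers. The fourth term uses $a_j \le 1/\lambda_0$ together with $\sum_{k=1}^j k|\lambda_k| = \alpha_I(1-\beta)\sum_{k}k\beta^k \le \alpha_I(1-\beta)\cdot\beta/(1-\beta)^2 = \alpha_I\beta/(1-\beta)$, producing the stated $\frac{\alpha_I T \beta}{\lambda_0(1-\beta)}$ factor. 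Combining the four bounds via Theorem~\ref{thm:adversarial} yields the claim.

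The main obstacle I anticipate is the analysis of the partial sums $a_t$: because the tail coefficients $q_i$ are now negative, one must carefully establish both nonnegativity of $a_t$ (to legitimately apply Theorem~\ref{thm:adversarial}) and its monotonicity in $t$ (which governs the telescoping that produces the third term). These rely precisely on the inequalities $0<s<\beta<1$ derived from the closed form of $s$, so the bulk of the care goes into the sign bookkeeping for $s-\beta$ and the geometric sums rather than into any deep estimate.
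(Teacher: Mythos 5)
Your route is the same as the paper's: simplify $\lambda_0=1-\alpha_I\beta$, invert $z[\lambda]$ via Lemma~\ref{lemma:transform}, read off a geometric tail for $q_i$, show $a_t$ is nonnegative and increasing so that $\sum_{s=2}^T(a_s-a_{s-1})^+$ telescopes to $a_T-a_1$, and feed the four terms of Theorem~\ref{thm:adversarial} exactly as you describe. One substantive point, and it is in your favor: your formula $q_i=s^{i-1}(s-\beta)/\lambda_0$ for $i\ge1$ is the correct one, whereas the paper's proof drops a factor $1/\lambda_0$ when expanding $z[q]\newequalT \frac{1-\beta z}{\lambda_0}\bigl(1+\sum_{i\ge1}s^iz^i\bigr)$ and records $q_i=-(\beta-s)s^{i-1}$; a direct check of $RQ=I$ gives $q_1=-\lambda_1 q_0/\lambda_0=-\alpha_I\beta(1-\beta)/\lambda_0^2$, which matches your expression and not the paper's. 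Consequently the exact third-term constant from your derivation is $a_T-a_1\le \frac{\beta-s}{\lambda_0(1-s)}=\frac{\alpha_I\beta}{\lambda_0}$ (using the identity $1-s=(1-\beta)/\lambda_0$), i.e., $1/\lambda_0$ times the $\frac{\beta-s}{1-s}=\alpha_I\beta$ displayed in Corollary~\ref{cor:PI}: what you prove is the corollary with its third term inflated by the constant $1/\lambda_0=1/(1-\alpha_I\beta)$, and the printed constant inherits the paper's algebraic slip. Since $\lambda_0\in(0,1)$ is independent of $T$ and $\eta$, this is immaterial downstream: the $O(T^{1/2})$ regret with $\eta\sim T^{-1/2}$ and Theorem~\ref{thm:master} are untouched. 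Two small polishings: at the one spot where you wrote ``of order,'' state the constant exactly as above, since that factor is precisely where your computation and the printed bound part ways; and your nonnegativity check closes in one line, because $(\beta-s)\frac{1-s^{T-t}}{1-s}\le\frac{\beta-s}{1-s}=\alpha_I\beta<1$ yields $a_t\ge\frac{1-\alpha_I\beta}{\lambda_0}=1>0$, which also legitimizes the hypothesis $a_t\ge0$ of Theorem~\ref{thm:adversarial} that you rightly flagged as the nontrivial obligation created by the negative tail coefficients $q_i$.
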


\begin{proof}

Notice that
\begin{align*}
    z[\lambda]&\newequal\lambda_0+\sum_{i=1}^{T-1} \alpha_I (1-\beta)\beta^i z^i \newequal\lambda_0+\alpha_I(1-\beta) \beta z \pran{\frac{1-(\beta z)^{T-1}}{1-\beta z}} \newequalT \lambda_0+ \frac{\alpha_I(1-\beta) \beta z}{1-\beta z}\ ,
\end{align*}
and using Lemma~\ref{lemma:transform}
\begin{align*}
    z[q]&\newequalT\frac{1}{z[\lambda]}\newequalT\frac{1-\beta z}{\lambda_0 - \pran{\lambda_0-\alpha_I (1-\beta)} \beta z}\newequal\frac{1-\beta z}{\lambda_0 - \pran{1-\alpha_I} \beta z}\\
    &\newequal\frac{1-\beta z}{\lambda_0} \pran{1+\sum_{i=1}^\infty s^i z^i}\newequal\frac{1}{\lambda_0} + \sum_{i=1}^\infty (s^i-\beta s^{i-1})z^i\\
    &\newequal \frac{1}{\lambda_0} - \beta \frac{\alpha_I (1-\beta)}{\lambda_0} \sum_{i=1}^\infty s^{i-1} z^i
    \newequal \frac{1}{\lambda_0} - (\beta-s) \sum_{i=1}^\infty s^{i-1} z^i \ ,
\end{align*}
where $\omega=\frac{(1-\alpha_I)\beta}{\lambda_0}$. Therefore, we have 
\begin{align*}
    q_0=\frac{1}{\lambda_0}\ ,
    q_t=-(\beta-\omega) \omega^{t-1} .
\end{align*}
Thus, $a_t$ is non-negative and monotonically increasing in $t$, and 
\begin{align*}
    \frac{1}{\eta}\pran{\max_{1\le s\le T} V(\mu, \mu_s) \sum_{s=2}^T (a_s - a_{s-1})^+} &= \frac{1}{\eta} \max_{1\le s\le T} V(\mu, \mu_s) (a_T -a_1)  = \frac{1}{\eta}  \max_{1\le s\le T} V(\mu, \mu_s) \sum_{t=1}^{T-1}(\beta-\omega) \omega^{t-1} \\
    &\le \frac{\beta-\omega}{\eta (1-\omega)} \max_{1\le s\le T} V(\mu, \mu_s)\ .
\end{align*}
Furthermore, we have $\|z_t\primalnorm$
\begin{align*}
    \sum_{j=1}^T \sum_{k=1}^j k |a_j \lambda_k| \le a_T \sum_{j=1}^T \sum_{k=1}^j k  \lambda_k = q_0 \alpha_I (1-\beta) \sum_{j=1}^T \sum_{k=1}^j k   \beta^k \le q_0 \alpha_I (1-\beta) T \frac{\beta}{(1-\beta)^2}=\frac{\alpha_I T \beta}{\lambda_0(1-\beta)}.
\end{align*}

Plugging into \eqref{eq:RFTL} results in

$$
\Regret{\mu}\le \frac{\eta G_2^2 T }{2\sigma \lambda_0}  + \frac{1}{\eta \lambda_0} V(\mu,\mu_1) +\frac{\beta-\omega}{\eta (1-\omega} \max_{1\le s\le T} V(\mu, \mu_s) + \sqrt{2} G_1 G_2 \frac{\eta}{\sigma} \frac{\alpha_I T \beta}{\lambda_0(1-\beta)}\,\ .\qedhere
$$
\end{proof}

Next, we study a PID controller. The behavior of a PID controller is governed by the roots of the following quadratic characteristic function in $z$, which is the denominator of transfer function of $q$:
\begin{equation}\label{eq:quadratic}
    \alpha_D \beta z^2 -\pran{\alpha_D + \beta(1-\alpha_I)} z + 1-\alpha_I \beta\ .
\end{equation}
The next assumption guarantees that the quadratic function \eqref{eq:quadratic} has two real roots, which corresponds to the convolutional linear filter being an overdamped oscillator. We denote $z_+\ge z_-$ as the two real roots of the quadratic function in \eqref{eq:quadratic}. Under this assumption, the coefficients $q_t$ and $a_t$ do not oscillate and $a_t \ge 0$. The condition in Assumption~\ref{ass:real-roots} trivially holds when $\alpha_D$ or $\alpha_I$ is zero. If this condition does not hold, $z_+$ and $z_-$ can be complex, and $a_t$ could oscillate, and some terms can be even negative.

\begin{ass}[Real roots]\label{ass:real-roots}
    The parameters satisfy $(\alpha_D+\beta(1-\alpha_I))^2\ge 4\alpha_D \beta (1-\alpha_I\beta)$.
\end{ass}

{ We next provide our regret bound for a PID controller when the characteristic function has real roots. As before, we obtain that a step-size $\eta \sim T^{-1/2}$ leads to regret $\Regret{\mu} = O(T^{1/2})$. While our regret bound is weaker than that of the pure P controller in Corollary~\ref{cor:P}, our numerical results in the next section suggest that PID controllers can lead to higher performance than a pure P controller.}

\begin{cor}\label{cor:PID}
    \textbf{PID controller (mirror descent with optimism and momentum).} 
We consider PID controller with $\alpha_D, \alpha_I >0$, $\alpha_D + \alpha_I \le 1$ and $0<\beta<1$. Thus, we have 
\begin{align*}
    \lambda_0&=1-\alpha_{I}+\alpha_I(1-\beta)\\
    \lambda_1&=-\alpha_D+\alpha_I(1-\beta) \beta \\
    \lambda_{i}&=\alpha_I (1-\beta)\beta^i \text{ for } i=2,...,T-1\ .
\end{align*}
Suppose Assumption~\ref{ass:real-roots} holds. Denote $M:=\frac{1}{c}+\frac{z_+}{ c \pran{1-\frac{az_+}{c}}^2}+\frac{\beta z_+}{ c \pran{1-\frac{az_+}{c}}^2}$. Then, it holds that
$$
\Regret{\mu}\le \frac{\eta G_2^2 T M }{2\sigma}  + \frac{M}{\eta} V(\mu,\mu_1) +  \frac{M}{\eta}\max_{1\le s\le T} V(\mu, \mu_s) + \sqrt{2} G_1 G_2 \frac{\eta M}{\sigma} \pran{T \alpha_D + \alpha_I  T \frac{\beta}{1-\beta}}\,\ .
$$
\end{cor}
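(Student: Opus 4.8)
The plan is to follow the same template as the proof of Corollary~\ref{cor:PI}: compute the relevant Z-transforms, invert using Lemma~\ref{lemma:transform} to obtain the coefficients $q_t$, form the partial sums $a_t = q_0 + \cdots + q_{T-t}$, bound the four quantities $\max_t a_t$, $a_1$, $\sum_{s=2}^T (a_s-a_{s-1})^+$, and $\sum_{j=1}^T\sum_{k=1}^j k|a_j\lambda_k|$ that appear in \eqref{eq:RFTL}, and finally substitute into Theorem~\ref{thm:adversarial}. The only genuinely new ingredient relative to the PI case is that the characteristic polynomial is now quadratic, so $q_t$ is a combination of two geometric sequences rather than one.

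First I would compute $z[\lambda]$. Since the PID weights in \eqref{eq:new-al} differ from the PI weights only by the extra $-\alpha_D$ in $\lambda_1$ (note $\lambda_0 = 1-\alpha_I\beta$ is unchanged), I can reuse the PI computation and write $z[\lambda] \newequalT \lambda_0 + \frac{\alpha_I(1-\beta)\beta z}{1-\beta z} - \alpha_D z$. Placing this over the common denominator $1-\beta z$ and simplifying, the numerator is exactly the characteristic quadratic \eqref{eq:quadratic}, so $z[\lambda] \newequalT \frac{\alpha_D\beta z^2 - (\alpha_D+\beta(1-\alpha_I))z + (1-\alpha_I\beta)}{1-\beta z}$. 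Lemma~\ref{lemma:transform} then gives $z[q] \newequalT \frac{1-\beta z}{\alpha_D\beta z^2 - (\alpha_D+\beta(1-\alpha_I))z + (1-\alpha_I\beta)}$, and in particular $q_0 = 1/\lambda_0 = 1/(1-\alpha_I\beta)$, which is the leading $1/c$ term of $M$.

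Next, under Assumption~\ref{ass:real-roots} the quadratic has two real roots $z_+ \ge z_-$, so I would factor the denominator and perform a partial-fraction expansion; reading off coefficients via the geometric-series identity for formal power series expresses $q_t$ (for $t\ge1$) as a real linear combination of $z_+^{-t}$ and $z_-^{-t}$. Because the roots are real, these geometric sequences do not oscillate, which is what lets me verify the hypothesis $a_t \ge 0$ required by Theorem~\ref{thm:adversarial} and, moreover, control the total variation of $a_t$. I would then bound $\max_t a_t$, $a_1$, and $\sum_{s=2}^T (a_s-a_{s-1})^+$ — all of which collapse into the single constant $M$ — using the summation identities $\sum_n x^n$ and $\sum_n n x^n = x/(1-x)^2$ applied to the geometric terms in $q_t$ and its partial sums. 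Separately, the weight-dependent sum is bounded by the triangle inequality, $|\lambda_1| \le \alpha_D + \alpha_I(1-\beta)\beta$ and $|\lambda_k| = \alpha_I(1-\beta)\beta^k$ for $k\ge2$, together with $\sum_{k\ge1} k\beta^k = \beta/(1-\beta)^2$, giving $\sum_{k=1}^j k|\lambda_k| \le \alpha_D + \alpha_I\beta/(1-\beta)$ and hence $\sum_{j=1}^T\sum_{k=1}^j k|a_j\lambda_k| \le M\,T\big(\alpha_D + \alpha_I\beta/(1-\beta)\big)$.

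Finally I would substitute these four bounds into \eqref{eq:RFTL}: the factor $M$ multiplies the gradient-variance term (giving $\eta G_2^2 TM/(2\sigma)$), the initial-distance term, the $\max_s V$ term, and the stability term, reproducing exactly the claimed bound. The main obstacle is the third paragraph: once the characteristic polynomial is quadratic, $a_t$ is a sum of two competing geometric sequences and is no longer monotone in general, so establishing both $a_t \ge 0$ and a clean uniform bound on its maximum and total variation is delicate. This is precisely where Assumption~\ref{ass:real-roots} (the overdamped, real-root regime) is indispensable: with complex roots the sequences $z_\pm^{-t}$ oscillate, $a_t$ can become negative — violating the sign hypothesis of Theorem~\ref{thm:adversarial} — and the total-variation sum $\sum_{s=2}^T (a_s-a_{s-1})^+$ could no longer be controlled by a single constant $M$.
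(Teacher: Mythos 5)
Your plan reproduces the paper's proof essentially step for step: the same computation of $z[\lambda]$, the same inversion via Lemma~\ref{lemma:transform} to $z[q] \newequalT (1-\beta z)/(az^2-bz+c)$ with $a=\alpha_D\beta$, $b=\alpha_D+\beta(1-\alpha_I)$, $c=1-\alpha_I\beta$ (so $q_0 = 1/c$), the same partial-fraction expansion under the real-roots assumption, the same collapse of $a_1$, $\max_t a_t$, and $\sum_{s=2}^T (a_s-a_{s-1})^+ \le \sum_{i} |q_i| \le M$, and the identical weight-sum estimate $\sum_{k=1}^j k|\lambda_k| \le \alpha_D + \alpha_I\beta/(1-\beta)$ before substituting into \eqref{eq:RFTL}. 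Two execution points are missing, however. First, Assumption~\ref{ass:real-roots} is a weak inequality, so the double root $z_+ = z_-$ is admissible; in that case your claim that $q_t$ is a linear combination of the two geometric sequences $z_+^{-t}$ and $z_-^{-t}$ is false, since the two-term partial fraction divides by $z_+ - z_-$. One needs the confluent expansion $z[q] \newequalT \frac{1-\beta z}{a(z-z_+)^2} \newequal \frac{1-\beta z}{az_+^2}\sum_{i\ge 0}(i+1)(z/z_+)^i$, which produces $q_i$ with a linear-in-$i$ factor; the paper handles this case separately and verifies $\sum_i |q_i| \le M$ there as well, using $z_+ = \sqrt{c/a} > 1$.

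Second, every one of your ``collapse into $M$'' steps applies $\sum_i i x^i = x/(1-x)^2$ with $x = az_+/c$, and this requires $az_+/c < 1$, which is not automatic and must be proved; it is exactly where the hypothesis $\alpha_D + \alpha_I \le 1$ enters. The paper shows $\frac{az_+}{c} = \frac{b+\sqrt{b^2-4ac}}{2c} < \frac{b+|2c-b|}{2c} = 1$, using $a+c > b$ and $2c-b = 2-\alpha_D-\beta-\beta\alpha_I \ge 0$. Your sketch never identifies this inequality, so the finiteness of $M$ and the geometric bounds are taken for granted. Relatedly, your justification of $a_t \ge 0$ (``the geometric sequences do not oscillate'') is too coarse on its own: each $q_i$ for $i \ge 1$ is a \emph{difference} of two nonnegative terms, one carrying the factor $-\beta$, and can be negative; nonnegativity of the partial sums $a_t$ follows only after the telescoping rearrangement carried out in the paper, which writes $a_t$ as $(1-\beta)\frac{1}{a(z_+-z_-)}\sum_{i=0}^{T-t}(a/c)^i(z_+^i - z_-^i)$ plus a nonnegative boundary term, each summand nonnegative because $z_+ \ge z_- \ge 0$. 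With these two repairs (confluent case and the ratio bound, plus the explicit telescoping for $a_t \ge 0$), your outline matches the paper's argument.
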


\begin{proof}


Notice that
\begin{align*}
    z[\lambda]&\newequal 1-\alpha_I -\alpha_D z + \sum_{i=0}^{T-1} \alpha_I (1-\beta)\beta^i z^i \newequal 1-\alpha_I -\alpha_D z + \frac{\alpha_I (1-\beta)}{1-\beta z} \ ,
\end{align*}
and by Lemma~\ref{lemma:transform}
\begin{align*}
z[q] \newequalT 1/z[\lambda]   \newequal\frac{1-\beta z}{\alpha_D \beta z^2 -\pran{\alpha_D + \beta(1-\alpha_I)} z + 1-\alpha_I \beta} \ . 
\end{align*}
Denote $a= \alpha_D \beta$, $b= \alpha_D + \beta(1-\alpha_I)$ and $c=1-\alpha_I \beta$, then
$$z[q]\newequalT\frac{1-\beta z}{az^2-bz+c}\ .$$
Recall $z_+\ge z_-$ are the two roots of the quadratic function $az^2-bz+c$ in $z$. That is, $z_+=\frac{b+\sqrt{b^2-4ac}}{2a}$ and $z_-=\frac{b-\sqrt{b^2-4ac}}{2a}$. It is easy to check that $z_+, z_-\ge 0$. Suppose $z_+$ and $z_-$ are distinct, then,
{
\begin{align*}
    z[q]&\newequalT\frac{1-\beta z}{a} \frac{1}{z-z_+} \frac{1}{z-z_-}\newequal\frac{1-\beta z}{a(z_+-z_-)} \pran{\frac{1}{z-z_+}-\frac{1}{z-z_-}}\\
    &\newequal\frac{1-\beta z}{a(z_+-z_-)} \pran{-\frac{1}{z_+}\sum_{i=0}^\infty (z/z^+)^i + \frac{1}{z_-}\sum_{i=0}^\infty (z/z^-)^i}\\
    &\newequal\frac{1-\beta z}{a(z_+-z_-)} \pran{\sum_{i=0}^\infty z^i \pran{-\frac{1}{z_+^{i+1}}+ \frac{1}{z_-^{i+1}}}}\newequal\frac{1-\beta z}{a(z_+-z_-)} \pran{\sum_{i=0}^\infty z^i \pran{\frac{a}{c}}^{i+1} \pran{z_+^{i+1}-z_-^{i+1}}}\\
    &\newequal\frac{1}{a(z_+-z_-)} \pran{ \frac{a}{c}\pran{z_+-z_-}+ \sum_{i=1}^\infty z^i \pran{\pran{\frac{a}{c}}^{i+1} \pran{z_+^{i+1}-z_-^{i+1}} - \beta \pran{\frac{a}{c}}^{i} \pran{z_+^{i}-z_-^{i}} }}
\end{align*}
}%
where the fifth equality uses $z_+ z_-=\frac{c}{a}$. Therefore, we have 
\begin{align*}
    q_0&=\frac{1}{c}\ ,
    q_i=\frac{1}{a(z_+-z_-)} \pran{\pran{\frac{a}{c}}^{i+1} \pran{z_+^{i+1}-z_-^{i+1}} - \beta \pran{\frac{a}{c}}^{i} \pran{z_+^{i}-z_-^{i}} } \text{ for } i\ge 1 \ .
\end{align*}
Thus, because $z_+ > z_-$ we obtain
{\small
\begin{align*}
    a_t&=q_0+\cdots+q_{T-t}\\
    &=(1-\beta) \frac{1}{a(z_+-z_-)} \pran{\sum_{i=0}^{T-t}\pran{\frac{a}{c}}^{i} \pran{z_+^{i}-z_-^{i}}}+\frac{1}{a(z_+-z_-)} \pran{\pran{\frac{a}{c}}^{T-t+1} \pran{z_+^{T-t+1}-z_-^{T-t+1}}  } \ge 0\ .
\end{align*}
}
Notice that $\frac{z_+^i-z_-^i}{z_+-z_-}=z_+^{i-1}+z_+^{i-2}z_-+\cdots + z_-^{i-1}\le i z_+^{i-1}$, thus we have
\begin{align*}
    |q_i|\le \frac{1}{a} \pran{\pran{\frac{a}{c}}^{i+1} (i+1) z_+^{i+1} + \beta \pran{\frac{a}{c}}^{i} i z_+^{i}}\ .
\end{align*}
Furthermore, using the formula for $z_+$ we obtain $$\frac{az_+}{c} = \frac{b+\sqrt{b^2-4ac}}{2c}< \frac{b+\sqrt{b^2-4bc+4c^2}}{2c}=\frac{b+|2c-b|}{2c}= 1\ ,$$
where the inequality uses $a+c>b$ and $c>0$, and the last equality uses $$2c-b=2-\alpha_D-\beta-\beta \alpha_I\ge 2-\alpha_D-\beta-\alpha_I\ge 0\ .$$
Therefore, we have $\frac{az_+}{c}<1$, and thus
\begin{align*}
    \sum_{i=0}^{T-1} |q_i|&\le \frac{1}{c} + \frac{1}{a} \pran{\sum_{i=1}^{T-1} (i+1) \pran{\frac{az_+}{c }}^{i+1}}+ \frac{\beta}{a} \pran{\sum_{i=1}^{T-1} i \pran{\frac{az_+}{c }}^{i}}\le \frac{1}{c}+\frac{z_+}{ c \pran{1-\frac{az_+}{c}}^2}+\frac{\beta z_+}{ c \pran{1-\frac{az_+}{c}}^2}=M\ .
\end{align*}
Suppose the two roots are the same, i.e., $z_+=z_-=\frac{b}{2a}>0$. We have 

\begin{align*}
    z[q]&\newequalT\frac{1-\beta z}{a(z-z_+)^2}\newequal\frac{1-\beta z}{az_+^2}\sum_{i=0}^{\infty} \frac{(i+1)z^i}{z_+^i}\newequal \frac{1}{az_+^2} + \sum_{i=1}^\infty \frac{1}{a z_+^2}\pran{\frac{i+1}{z_+^i}-\frac{\beta i}{z_+^{i-1}}}z^i\ .
\end{align*}
Thus 
\begin{align*}
    q_0=\frac{1}{az_+^2} \ ,q_i=\frac{1}{a z_+^2}\pran{\frac{i+1}{z_+^i}-\frac{\beta i}{z_+^{i-1}}} \text{ for } i\ge 1\ .
\end{align*}
Since $z_+=\frac{b}{2a}=\sqrt{\frac{c}{a}}>1$, we have
\begin{align*}
    \sum_{i=0}^{T-1} |q_i|&\le \frac{1}{c} + \frac{1}{c} \pran{\sum_{i=1}^{T-1} \pran{\frac{i+1}{z_+^i}}+ {\beta} \pran{\sum_{i=1}^{T-1} \frac{i}{z_+^{i-1}}}}\le \frac{1}{c}\pran{1+(1+\beta)/\pran{1-\frac{1}{z_+}}^2} =M\ .
\end{align*}
Therefore, in both cases, we have 
\begin{equation*}
    \sum_{i=0}^{T-1} |q_i|\le M\ .
\end{equation*}
Thus, it holds that
\begin{align}\label{eq:pid-eq1}
    \frac{1}{\eta}\pran{\max_{1\le s\le T} V(\mu, \mu_s) \sum_{s=2}^T (a_s - a_{s-1})^+} &\le \frac{1}{\eta}\pran{\max_{1\le s\le T} V(\mu, \mu_s) \sum_{s=2}^T |q_{T-s+1}|}\le \frac{M}{\eta}\max_{1\le s\le T} V(\mu, \mu_s)\ .
\end{align}
Furthermore, we always have $a_j=q_0+\cdots+q_{T-t}\le M$, thus
\begin{align}\label{eq:pid-eq2}
    G_1 G_2 \frac{\eta}{\sigma} \sum_{j=1}^T \sum_{k=1}^j k |a_j \lambda_k|&\le G_1 G_2 \frac{\eta M}{\sigma} \sum_{j=1}^T \sum_{k=1}^j k | \lambda_k|\le G_1 G_2 \frac{\eta M}{\sigma} \pran{T \alpha_D + \alpha_I (1-\beta) \sum_{j=1}^T \sum_{k=1}^j k   \beta^k} \\
    &\le G_1 G_2 \frac{\eta M}{\sigma} \pran{T \alpha_D + \alpha_I (1-\beta) T \frac{\beta}{(1-\beta)^2}}=G_1 G_2 \frac{\eta M}{\sigma} \pran{T \alpha_D + \alpha_I  T \frac{\beta}{1-\beta}}\ .\nonumber
\end{align}
Plugging \eqref{eq:pid-eq1} and \eqref{eq:pid-eq2} into \eqref{eq:RFTL}, we arrive at
$$
\Regret{\mu}\le \frac{\eta G_2^2 T M }{2\sigma}  + \frac{M}{\eta} V(\mu,\mu_1) +  \frac{M}{\eta}\max_{1\le s\le T} V(\mu, \mu_s) + \sqrt{2} G_1 G_2 \frac{\eta M}{\sigma} \pran{T \alpha_D + \alpha_I  T \frac{\beta}{1-\beta}}\,\ ,
$$
which finishes the proof.
\end{proof}

\section{Back to Online Allocation Problems}
\label{sec:back_to_online}

In this section, we complete the picture by providing our final result that establishes the first performance guarantees for the dual-based PID controller (Algorithm \ref{al:PID}) on online resource allocation problems. As mentioned earlier in the introduction, the key ideas we use for this final step are those from~\cite{balseiro2020best}. For the online allocation problem \eqref{eq:OPT}, we consider a data-driven regime in which requests are drawn independently from a probability distribution $\cP\in \Delta(\cS)$ that is unknown to the decision maker, where $\Delta(\cS)$ is the space of all probability distributions over support set $\cS$. An online algorithm $A$ makes, at time $t$, a real-time decision $x_t$ based on the current request $(f_t, b_t, \cX_t)$ and the history. We define the reward of an algorithm for input $\vgamma$ as $R(A | \vgamma) = \sum_{t=1}^T f_t(x_t)$,
where $x_t$ is the action taken by algorithm $A$ at time $t$. Moreover, the algorithm $A$ must satisfy constraints $\sum_{t=1}^{T} b_{t}(x_{t}) \le B$ and $x_{t}\in \cX$ for every $t\le T$. Let $\rho_j = B_j/T$ be the average availability of resource $j$. We denote by $\lbrho = \min_{j \in [m]} \rho_j$ the lowest resource parameter and $\ubrho= \max_{j \in [m]} \rho_j$ the largest resource parameter. We measure the regret of an algorithm as the worst-case difference over distributions in $\Delta(\cS)$, between the expected performance of the benchmark and the algorithm:
    \begin{align*}
    \Regret{A} = \sup_{\cP \in \Delta(\cS)}  \left\{ \EE_{\vgamma \sim \cP^T} \left[ \OPT(\vgamma) - R(A|\vgamma) \right] \right\}\,.
    \end{align*}
    

The next assumption imposes some regularity conditions on the requests. Throughout this section, we use the $\|\cdot\|_\infty$ norm as primal norm and $\|\cdot\|_1$ as dual norm.

\begin{ass}[Regularity conditions on the requests]
    \label{ass:p}
	There exists $\ubf\in\RR_{+}$ and $\ubb \in \RR_+$ such that for all requests $(f,b,\cX) \in \cS$ in the support, it holds that 
	\begin{enumerate}
 	    \item The feasible set satisfies $0\in \cX$.
 	    \item The reward functions satisfy $0 \le f(x)\le \ubf$ for every $x\in\cX$.
 	    \item The resource consumption functions satisfy $b(x)\ge 0$ and $\|b(x)\|_{\infty} \le \ubb$ for every $x\in\cX$.
 	    \item The optimization problems in \eqref{eq:primal_decision} admit an optimal solution.
 	\end{enumerate}
\end{ass}

The following assumption on the reference function $h$ is a refinement of Assumption~\ref{ass:h} and is needed to develop our theoretical guarantees.

\begin{ass}[Separability of the reference function $h$]\label{ass:h-sep} The reference function $h(\mu)$ is coordinate-wisely separable, i.e., $h(\mu)=\sum_{j=1}^m h_j(\mu_j)$ where $h_j:\RR_+ \rightarrow \RR $ is an univariate function, which is either differentiable or essentially smooth. Moreover, for every resource $j$ the function $h_j$ is $\sigma$-strongly convex over $[0, \mumax_j]$ with $\mumax_j:= \ubf / \rho_j + 4\eta(\ubb + \rho_j) / (\sigma (1-\beta))$ and $\rho_j = B_j/T$.
\end{ass}

The reference function used in Proposition~\ref{prop:PID-equi} to show the equivalence between Algorithm~\ref{al:PID} and online mirror descent is separable by assumption because $h_j(\mu_j) = \int_0^{\mu_j} \ell_j(s) ds$ and convex whenever the functions $\ell_j$ are increasing. For the strong convexity of $h_j(\mu_j)$, it is enough that $\ell_j'(s) \ge \sigma$. 

We next discuss the implications of different choices for the reference functions. One natural choice is the squared-Euclidean norm, which yields online gradient descent with momentum and, by Proposition~\ref{prop:PID-equi}, the additive PID controller in \eqref{eq:api}. Because the reference function $h(\mu) = \| \mu\|_2^2/2$ is strongly convex over $\mathbb R_+^m$, then Assumption~\ref{ass:h-sep} is satisfied and our results apply. Another natural choice is using the negative entropy $h(\mu) = \sum_{j=1}^m \mu_j \log(\mu_j)$, which by Proposition~\ref{prop:PID-equi} would lead to the multiplicative PID controller in \eqref{eq:mpi}. Unfortunately, this function is not strongly convex over $\mathbb R_+^m$ because the curvature of the negative entropy goes to zero for large values of $\mu$, i.e., it is asymptotically linear. To apply this algorithm, we need a more sophisticated analysis of the evolution of the iterates.

Below we present an analysis of the evolution of the iterates with the goal of proving that these remain bounded through the run of the algorithm. The proof of the below lemma can be found in Appendix \ref{app:missing}.

\begin{lem}\label{lemma:iterates}
Suppose that Assumption~\ref{ass:h-sep} holds and for resource $j$ the initial conditions satisfy $\mu_{1,j} \le \ubf / \rho_j$ and $z_{1,j} = 0$. Then we have that $\mu_{t,j} \le \mumax_j$ for all $t\ge 1$.
\end{lem}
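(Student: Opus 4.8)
The plan is to exploit separability (Assumption~\ref{ass:h-sep}) to reduce the bound to a scalar statement per resource, and then to tame the resulting scalar recursion using the negative-feedback (self-correction) structure of the controller. Since $\mathcal U=\RR_+^m$ and $h(\mu)=\sum_j h_j(\mu_j)$ are separable, the convolutional mirror descent update decouples coordinatewise into $\mu_{t+1,j}=\arg\min_{u\ge0}\{z_{t,j}u+\tfrac1\eta V_{h_j}(u,\mu_{t,j})\}$, whose optimality condition reads $h_j'(\mu_{t+1,j})=\max\{h_j'(0),\,h_j'(\mu_{t,j})-\eta z_{t,j}\}$, the maximum encoding the projection onto $\RR_+$. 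Writing $\phi_t:=h_j'(\mu_{t,j})$ and recalling that $h_j'$ is increasing, it suffices to prove $\phi_t\le h_j'(\mumax_j)$ for every $t$, since this immediately gives $\mu_{t,j}\le\mumax_j$. Throughout I will use the controller form of the convolution established in Proposition~\ref{prop:PID-equi}, namely $\eta z_{t,j}=K_P g_{t,j}+K_I\sum_{s=0}^{t-1}\beta^s g_{t-s,j}+K_D(g_{t,j}-g_{t-1,j})$ with $g_{s,j}=\rho_j-b_{s,j}(x_s)$.

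The key structural input is a \emph{pseudo-consumption bound}. Using that the null action $0\in\cX$ satisfies $f(0)=b(0)=0$, optimality of $\tilde x_s$ (or the fact that $x_s=0$ when the cap binds) gives $\mu_{s,j}\,b_{s,j}(x_s)\le f_s(x_s)\le\ubf$, so $b_{s,j}(x_s)\le \ubf/\mu_{s,j}$ whenever $\mu_{s,j}>0$. Hence whenever $\mu_{s,j}\ge \ubf/\rho_j$ the error is nonnegative, $g_{s,j}=\rho_j-b_{s,j}(x_s)\ge 0$; this is the negative feedback that prevents the dual variable from exploding. I will also record the crude uniform estimate $|z_{t,j}|\le\ubb+\rho_j$, which follows from $|g_{s,j}|\le\ubb+\rho_j$, $|g_{s,j}-g_{s-1,j}|\le\ubb$, the geometric bound $\sum_s\beta^s\le 1/(1-\beta)$, and $\eta=K_P+K_I/(1-\beta)+K_D$.

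The heart of the argument is a window/telescoping estimate. Fix $t$ with $\phi_t>h_j'(\ubf/\rho_j)$ and let $[t_0,t]$ be the maximal block on which $\mu_{s,j}>\ubf/\rho_j$; on this block $g_{s,j}\ge0$ and every step is interior (hitting the boundary would force $\mu=0$), so telescoping the interior recursion yields $\phi_t=\phi_{t_0}-\eta\sum_{s=t_0}^{t-1}z_{s,j}$. I will lower-bound the summed increment by discarding the nonnegative $P$-contribution and the in-block part of the $I$-term, telescoping the $D$-term to $K_D(g_{t-1,j}-g_{t_0-1,j})\ge-K_D\rho_j$, and bounding the stale pre-block tail of the exponentially weighted $I$-term by $-K_I\ubb\,\beta/(1-\beta)^2$. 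Combined with the entry estimate $\phi_{t_0}\le h_j'(\ubf/\rho_j)+\eta(\ubb+\rho_j)$ (from $|z|\le\ubb+\rho_j$, or directly from $\mu_{1,j}\le\ubf/\rho_j$ when $t_0=1$), this gives an overshoot of at most $\eta(\ubb+\rho_j)+K_D\rho_j+K_I\ubb\beta/(1-\beta)^2$. Using $K_D\le\eta$ and $K_I\le\eta(1-\beta)$, this is at most $3\eta(\ubb+\rho_j)/(1-\beta)$, comfortably within the budget $h_j'(\mumax_j)-h_j'(\ubf/\rho_j)\ge\sigma(\mumax_j-\ubf/\rho_j)=4\eta(\ubb+\rho_j)/(1-\beta)$ furnished by $\sigma$-strong convexity on $[0,\mumax_j]$. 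Thus $\phi_t\le h_j'(\mumax_j)$, which closes the induction.

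The main obstacle is exactly the integral term's infinite memory: once $\mu_{s,j}$ has risen above the equilibrium threshold $\ubf/\rho_j$ and the current errors have turned nonnegative, the exponentially weighted sum still carries the negative errors accumulated while $\mu$ was small, and these can keep pushing the dual variable upward. A naive one-step bound on $z_{t,j}$ only shows drift of $O(\eta)$ per period and does not rule out a $T$-dependent blow-up; the block telescoping above is what shows the cumulative effect of this stale memory is geometrically summable (the $\beta/(1-\beta)^2$ factor), so the total overshoot is $O(\eta/(1-\beta))$ independently of $T$ and is absorbed by the strong-convexity slack built into the definition of $\mumax_j$. The initialization $z_{1,j}=0$ and $\mu_{1,j}\le\ubf/\rho_j$ guarantees the recursion starts below threshold with empty controller memory, so the estimate is valid from $t=1$.
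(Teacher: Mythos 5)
Your proposal is correct, and it shares the paper's overall skeleton: the pseudo-consumption bound $\mu_{t,j}\,b_{t,j}(x_t)\le \ubf$ forcing $g_{t,j}\ge 0$ above the threshold $\ubf/\rho_j$, the maximal excursion block on which no projection can occur, a one-step bound on the controller increment, and absorption of the total overshoot by the slack $4\eta(\ubb+\rho_j)/(\sigma(1-\beta))$ built into $\mumax_j$. Where you genuinely diverge is in the two technical steps. First, to tame the integral term's memory, the paper forms the $\beta$-shifted recursion $\Delta_{t+1,j}=\beta\Delta_{t,j}+\eta\alpha_P(\beta g_{t-1,j}-g_{t,j})+\eta\alpha_D[\beta(g_{t-1,j}-g_{t-2,j})-(g_{t,j}-g_{t-1,j})]-\eta\alpha_I(1-\beta)g_{t,j}$, which cancels the exponential average $e_{t-1}$ entirely, and then unrolls this linear recursion over the block; you instead telescope $\eta\sum_s z_{s,j}$ directly and split the integral term into an in-block part (nonnegative by the feedback sign, hence discarded) and a stale pre-block tail bounded by the double geometric sum $K_I\ubb\,\beta/(1-\beta)^2$. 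Your sum-splitting makes the mechanism more transparent—it is exactly the stale negative errors that threaten blow-up, and they are geometrically summable—while the paper's recursion is more mechanical and arguably extends more readily to other convolution kernels. Second, to convert from gradient space back to $\mu$-space, the paper invokes the $\frac{1}{\sigma}$-smoothness of the conjugate $h_j^*$, which is slightly delicate because strong convexity is only assumed on $[0,\mumax_j]$; your direct route via the gradient inequality $h_j'(\mumax_j)-h_j'(\ubf/\rho_j)\ge\sigma(\mumax_j-\ubf/\rho_j)$ plus monotonicity of $h_j'$ sidesteps conjugate duality altogether and, with your strict slack ($3<4$ in units of $\eta(\ubb+\rho_j)/(1-\beta)$), closes the contradiction cleanly even though $h_j'$ is only nondecreasing beyond $\mumax_j$. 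Two cosmetic notes: no induction is actually needed (your block estimate applies to each $t$ directly, and $\mu_{1,j}\le\ubf/\rho_j$ forces $t_0\ge2$, so the $t_0=1$ parenthetical is vacuous), and your sharper bound $|g_{t,j}-g_{t-1,j}|\le\ubb$ (versus the paper's cruder $2\bar G$) is what buys you the comfortable constant.
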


The previous results have two important consequences for the analysis of dual-based PID controllers. Firstly, since the iterates $\mu_t$ stay in a bounded region, the third term in regret bound of CMD in Theorem \ref{thm:adversarial}, which depends on $V(\mu,\mu_t)$, can be upper bounded by a constant when $\mu$ is bounded. This is required to apply the machinery developed in \cite{balseiro2020best}. Secondly, it allows one to apply Algorithm~\ref{al:PID} to exponential weights (i.e., \eqref{eq:mpi}) because the reference function $h_j(\mu_j) = \mu_j \log(\mu_j)$ is $(\mumax_j)^{-1}-$strongly convex over the set $[0,\mumax_j]$. Therefore, we can restrict the algorithm to the box $\mathcal U:=\prod_{j=1}^m [0,\mumax_j]$ without loss of optimality. A similar analysis can be extended to other non-linear updates such as the one described in \eqref{eq:non-linear-update}, which do not necessarily induce strongly convex reference functions over the non-negative orthant. For example, in the case of budget pacing, the popular multiplicative pacing strategy given in \eqref{eq:alternative-pacing} leads to the reference function $h(\mu) = \int_0^\mu \ell(s) ds = q (1+\mu)\log(1+\mu)-\mu(q+1)$. This reference function is not strongly convex over the non-negative reals, but it is $q (1+\mumax)^{-1}-$strongly convex over the set $[0,\mumax]$.




Theorem \ref{thm:master} presents the regret bound of the dual-based PID controller for online allocation problems.


\begin{thm}\label{thm:master}
Suppose requests are drawn independently from a fixed unknown distribution, the reference function $h$ used in the update satisfies Assumption \ref{ass:h-sep}, and the parameters of the PID controller satisfy Assumption~\ref{ass:real-roots}, $\beta \in [0,1)$, $\alpha_D, \alpha_I \ge 0$, $\alpha_D<1$ and $\alpha_D+\alpha_I\le 1$. If the step-size satisfies $\eta = \Theta(T^{-1/2})$ and initial resources scale linearly with $T$, then it holds that
\[
\Regret{A} = O(T^{1/2})\ .
\]
\end{thm}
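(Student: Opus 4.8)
The plan is to combine the adversarial regret bound for CMD (Theorem~\ref{thm:adversarial}, specialized to PID controllers in Corollary~\ref{cor:PID}) with the machinery of \cite{balseiro2020best} to translate a dual regret bound into a primal regret bound for the online allocation problem. The essential bridge is Proposition~\ref{prop:PID-equi}, which tells us that running the dual-based PID controller is \emph{exactly} running CMD on the sequence of dual functions $w_t(\mu) = f_t^*(\mu) + (B/T)^\top \mu$, with $g_t = B/T - b_t(x_t)$ serving as a valid subgradient whenever $\tilde x_t = x_t$. Since requests are drawn i.i.d.\ from an unknown $\cP$, the expected dual function $\bar w(\mu) = \EE_{\gamma\sim\cP}[w_\gamma(\mu)]$ is well-defined, and the total expected regret decomposes into (i) a \emph{complementary-slackness / stopping-time} term accounting for the horizon at which resources run out, and (ii) a \emph{dual regret} term controlled by Corollary~\ref{cor:PID}.

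First I would verify the preconditions of the CMD bound hold uniformly over the run of the algorithm. Under Assumption~\ref{ass:p}, the subgradients satisfy $\|g_t\|_\infty \le \max(\ubb, \ubrho)$, so $G_1 = O(1)$; since the weights $\lambda_i$ are summable (Proposition~\ref{prop:PID-equi}), $\|z_t\|_\infty \le G_1 \sum_i |\lambda_i| = O(1)$ as well, giving $G_2 = O(1)$. Crucially, Lemma~\ref{lemma:iterates} guarantees that with the stated initialization the iterates stay in the box $\mathcal U = \prod_j [0,\mumax_j]$, where $\mumax_j = O(1)$ since $\rho_j = B_j/T = \Theta(1)$ scales with $T$ and $\eta = \Theta(T^{-1/2})$. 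On this box the reference function $h$ is $\sigma$-strongly convex by Assumption~\ref{ass:h-sep}, so Assumption~\ref{ass:h} is satisfied and Corollary~\ref{cor:PID} applies. Because the benchmark dual $\mu^*$ and all iterates lie in this bounded box, every Bregman term $V(\mu,\mu_s)$ and $V(\mu,\mu_1)$ is $O(1)$, so Corollary~\ref{cor:PID} yields a dual regret bound of the form $\frac{\eta}{\sigma}\cdot O(T) + \frac{1}{\eta}\cdot O(1)$. Choosing $\eta = \Theta(T^{-1/2})$ balances these two terms and delivers dual regret $O(T^{1/2})$.

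Next I would invoke the reduction of \cite{balseiro2020best}: for dual-based algorithms whose dual updates satisfy a regret bound against the best fixed dual variable, the primal regret $\Regret{A}$ against $\OPT(\vgamma)$ is bounded by the dual regret plus a term that accounts for the early stopping when some resource is exhausted. The stopping-time term is typically controlled by the per-period subgradient magnitude and the strong-convexity constant, and under Assumption~\ref{ass:p} (bounded rewards $\ubf$ and consumption $\ubb$) it contributes at most $O(T^{1/2})$ as well. Combining the dual regret $O(T^{1/2})$ with this $O(T^{1/2})$ stopping correction gives the claimed $\Regret{A} = O(T^{1/2})$.

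The main obstacle I anticipate is twofold. The delicate point is ensuring the iterate-boundedness argument (Lemma~\ref{lemma:iterates}) interacts correctly with the \emph{convolutional} structure: unlike standard mirror descent, the effective gradient $z_t$ aggregates past errors, so one must confirm that $z_t$ remains bounded and that the box $\mathcal U$ chosen in Assumption~\ref{ass:h-sep} is genuinely invariant under the PID update with momentum and optimism terms. The second subtlety is that the \cite{balseiro2020best} framework was developed for dual \emph{subgradient descent} rather than CMD; I would need to check that their primal-to-dual reduction depends only on the existence of a regret bound against a fixed comparator (which Corollary~\ref{cor:PID} supplies) and on the complementary-slackness structure of $g_t$, rather than on the specific form of the dual update. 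Provided the reduction is genuinely modular in this sense---which the authors' framing in the introduction suggests---the remaining steps are routine bookkeeping with the $\Theta(T^{-1/2})$ step-size.
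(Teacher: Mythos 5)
Your proposal follows essentially the same route as the paper's proof: use Lemma~\ref{lemma:iterates} to keep the iterates in a bounded box where Assumption~\ref{ass:h-sep} gives $\sigma$-strong convexity, invoke Theorem~1 of \cite{balseiro2020best} to reduce the primal regret $\Regret{A}$ to the dual regret against fixed comparators, and bound the dual regret via the CMD corollaries with $\eta = \Theta(T^{-1/2})$, so your anticipated worry about modularity of the reduction is resolved exactly as you guessed. Two minor points of divergence: the paper splits into four cases (Corollaries~\ref{cor:P}, \ref{cor:PD}, \ref{cor:PI}, \ref{cor:PID}) rather than relying on Corollary~\ref{cor:PID} alone, since the constant $M$ there is defined through the roots of the characteristic quadratic and degenerates when $\alpha_D = 0$ or $\alpha_I = 0$ (both permitted by the theorem's hypotheses); and the stopping-time correction in the reduction is the constant $\ubf\ubb/\lbrho = O(1)$, not $O(T^{1/2})$ as you estimated, though this does not affect the final bound.
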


\begin{proof}
    It follows from Lemma \ref{lemma:iterates} that the dual variable $\mu_{t,j}\le \mumax_j$ stays in a bounded region for any $t\ge 1$. Thus, Assumption~\ref{ass:h-sep} implies that $h_j$ is $\sigma$-strongly convex. Next, we will invoke Theorem 1 in \cite{balseiro2020best} to link $\Regret{A}$ with $\Regret{\mu}$, and then utilize Theorem \ref{thm:adversarial} to bound  $\Regret{\mu}$. More specifically, Assumption \ref{ass:p} and the $\sigma$-strong-convexity of $h_j$ guarantee that the assumptions of Theorem 1 in \cite{balseiro2020best} hold.   
    It then follows from Theorem 1 in \cite{balseiro2020best} that 
    \begin{align}\label{eq:regret-connection}
        \Regret{A}\le \frac{\ubf \ubb}{\lbrho} + \max_{\mu \in \{0, (\bar f/\rho_1) e_1,\ldots, (\bar f/\rho_m) e_m\}}\Regret{\mu} \ ,
    \end{align}
    where $e_j\in\RR^m$ is the $j$-th unit vector. Notice that in the case when $\alpha_I=\alpha_D=0$, we recover a P~controller, and a bound on $\Regret{\mu}$ is given in Corollary \ref{cor:P}; when $\alpha_I=0$ and $\alpha_D>0$, we recover a PD~controller, and a bound on $\Regret{\mu}$ is given in Corollary \ref{cor:PD}; when $\alpha_I>0$ and $\alpha_D=0$, we recover a PI~controller, and a bound on $\Regret{\mu}$ is given in Corollary \ref{cor:PI}; when $\alpha_I, \alpha_D>0$, we recover a PID~controller, and a bound on $\Regret{\mu}$ is given in Corollary \ref{cor:PID}. In all these four cases, we have that $\max_{\mu \in \{0, (\bar f/\rho_1) e_1,\ldots, (\bar f/\rho_m) e_m\}}\Regret{\mu} \le O(\eta T)+ O(\frac{1}{\eta})= O(\sqrt{T})$ in the case when $\eta =\Theta(\sqrt T)$ because $\rho$ is fixed. This finishes the proof with \eqref{eq:regret-connection}.
\end{proof}

	
Theorem~\ref{thm:master} implies asymptotic optimality when the expected offline performance satisfies $\EE_{\vgamma \sim \cP^T} \left[ \OPT(\vgamma) \right] = \Theta(T)$ and initial resources scale linearly with $T$, i.e., they satisfy $B_j = \rho_j T$ for some fixed $\rho_j > 0$. The expected offline performance grows linearly, for example, if for every request $\gamma$ there exists some action $x(\gamma)$ that generates a positive expected reward, i.e., $\EE_{\gamma \sim \mathcal P}[ f(x(\gamma))] > 0$. Under these assumptions, the dual-based PID controller achieves an asymptotic competitive ratio of one, i.e., 
\[
    \lim_{T\rightarrow \infty} \inf_{\cP \in \Delta(\cS)}  \frac{ \EE_{\vgamma \sim \cP^T} \left[ R(A|\vgamma) \right]} {\EE_{\vgamma \sim \cP^T} \left[ \OPT(\vgamma) \right]} = 1 \,.
\]

{
\subsection{Numerical Experiments}

We evaluate the performance of our algorithm on a set of random online linear programming problems, which are motivated by the dynamic allocation of advertising campaigns. The action set is the unit simplex $\cX = \{ x \in \RR_+^d : \sum_{i=1}^d x_i \le 1\}$. The reward and consumption functions are both linear and given by $f_t(x) = r_t^\top x$ and $b_t(x) = c_t x$, respectively, where $r_t \in \RR_+^d$ is a reward vector and $c_t \in \RR_+^{m \times d}$ is a consumption matrix. As in Section~\ref{sec:back_to_online}, we want $\sum_{t=1}^T b_t(x_t) \leq B$ where $B \in R^m_+$ and $\rho_j = B_j /T$ is the average availability of resource $j$ over the horizon of length $T$. 

{
\paragraph{Data Generation Process.} There are two layers of randomness in our experiments: we first randomly draw parameters of the underlying instance, and then we sample multiple sample paths of reward vectors and resource consumption matrices from these distributions.

Each entry of the consumption matrix $c_t$ is drawn independently from a Bernoulli distribution with a row-dependent probability parameter $p_j$ for each resource $j=1,\ldots,m$. The reward vector is given by $r_t=\beta \cdot \left(\theta^\top c_t + \delta_t \mathbf 1\right)$, where $\beta \ge 0$ is a scale parameter, $\theta \in \RR^m$ is a vector that correlates consumption $c_t$ with reward $r_t$, and $\delta_t \in \RR$ is an i.i.d.~Gaussian noise with mean zero and standard deviation 0.1. Therefore, an instance is characterized by the tuple $I = (p,\beta, \theta, \rho)$ with $p \in [0,1]^m$, $\beta \in \RR_+$, $\theta \in \RR^m$, and $\rho \in \RR_+^m$.

Finally, the parameters of the instance $I$ are generated as follows. We set $p_j \sim \operatorname{Uniform}(0, 1)$ and $\rho_j \sim \operatorname{Uniform}(0,1)$ for each resource $j = 1,\ldots,m$. We generate $\theta$ from a standard multi-variate Gaussian distribution $N(0,\text{diag}(\mathbf 1))$ and then it is normalized to satisfy $\|\theta\|_2=1$. Finally, the scale parameter $\beta$ is drawn from a log-normal distribution with mean 0 and standard deviation 2. The variability in the scale parameters makes the allocation problems harder by forcing the optimal objective values and optimal dual variables to have different orders of magnitude across instances, and reduces the possibility of overfitting the PID controller's parameters to a particular instance.}

For our experiments, we chose the number of resources to $m=10$ and the dimension of the action set to $d=5$. We generated 20 different sets of parameters for the first layer and then simulated 10 trials for each set of parameters. We tried different lengths of horizon but reported results for $T=1,000$---the results for other lengths of horizons were similar and thus omitted.

\paragraph{PID Controller Parameters.} In our algorithm, we chose the squared-Euclidean norm, yielding the additive PID controller in \eqref{eq:api}. We tried different combinations for the parameters $\alpha_P$, $\alpha_I$, $\alpha_D$, and the momentum parameter $\beta$. We set the step-size to be $\eta = s T^{-1/2}$, where $s \in \mathbb R^+$ is a multiplier of the step-size. We try the following combinations of parameters: $s \in \{0.1, 1, 10, 100\}$, $\beta \in \{0, 0.9, 0.99, 0.999\}$, $\alpha_D \in \{0, 0.25, 0.5, 0.75\}$, $\alpha_I \in \{0, 0.25, 0.5, 0.75\}$, and $\alpha_P = 1 - \alpha_D - \alpha_I$. Combinations that lead to negative values of $\alpha_P$ are excluded, i.e., we only consider $\alpha_I + \alpha_D \le 1$. The values of the gain parameters $K_P,K_I,K_D$ can be recovered using Proposition~\ref{prop:PID-equi}. 

For each combination of parameters, we run $20 \times 10 = 200$ sample paths $\vgamma$ with $T=1,000$ request each. For each sample path $\gamma$, we calculate the reward of the algorithm $R(A|\vgamma)$, the offline optimum $\OPT(\vgamma)$ by solving a linear program, and the pathwise competitive ratio $R(A|\vgamma)/\OPT(\vgamma)$.  Average competitive are reported in Table~\ref{tab:results} and selected competitive ratios for PI, PD, and PID controllers are reported in Figure~\ref{fig:experiments}. The black error bars provide $95\%$ confidence intervals.

\begin{table}[p]
    \centering
        \begin{tabular}{ccc|cccc}%
        &&& \multicolumn{4}{c}{Competitive ratios}\\
        &&& \multicolumn{4}{c}{for different step-size multipliers}\\\hline
        $\alpha_I$ & $\alpha_D$ & $\beta$ & $s=0.1$ & $s=1$ & $s=10$ & $s=100$\\\hline
        \csvreader[
        head to column names
        ]{resultsvar2-10x20x1000-pivot-ssc.csv}{momentum_beta=\mb,alpha_D=\aD,alpha_I=\aI,0.1=\sa,1.0=\sb,10.0=\sc,100.0=\sd}{%
        \aI & \aD & \mb & \ApplyGradient{\sa}{\num[round-mode=places,round-precision=3]{\sa}} & \ApplyGradient{\sb}{\num[round-mode=places,round-precision=3]{\sb}} & \ApplyGradient{\sc}{\num[round-mode=places,round-precision=3]{\sc}} & \ApplyGradient{\sd}{\num[round-mode=places,round-precision=3]{\sd}}\\
        }%
        \end{tabular}
    \caption{Average competitive ratios for different combinations of parameters of a PID controller.}
    \label{tab:results}
\end{table}



 \begin{figure}
     \centering
     \subcaptionbox{PI controller ($\alpha_D = 0$ and $\alpha_I = 0.5$) for different values of $\beta$ and step-sizes multipliers $s$.\label{fig:experiments-PI}}{
     \includegraphics[width=0.45\textwidth]{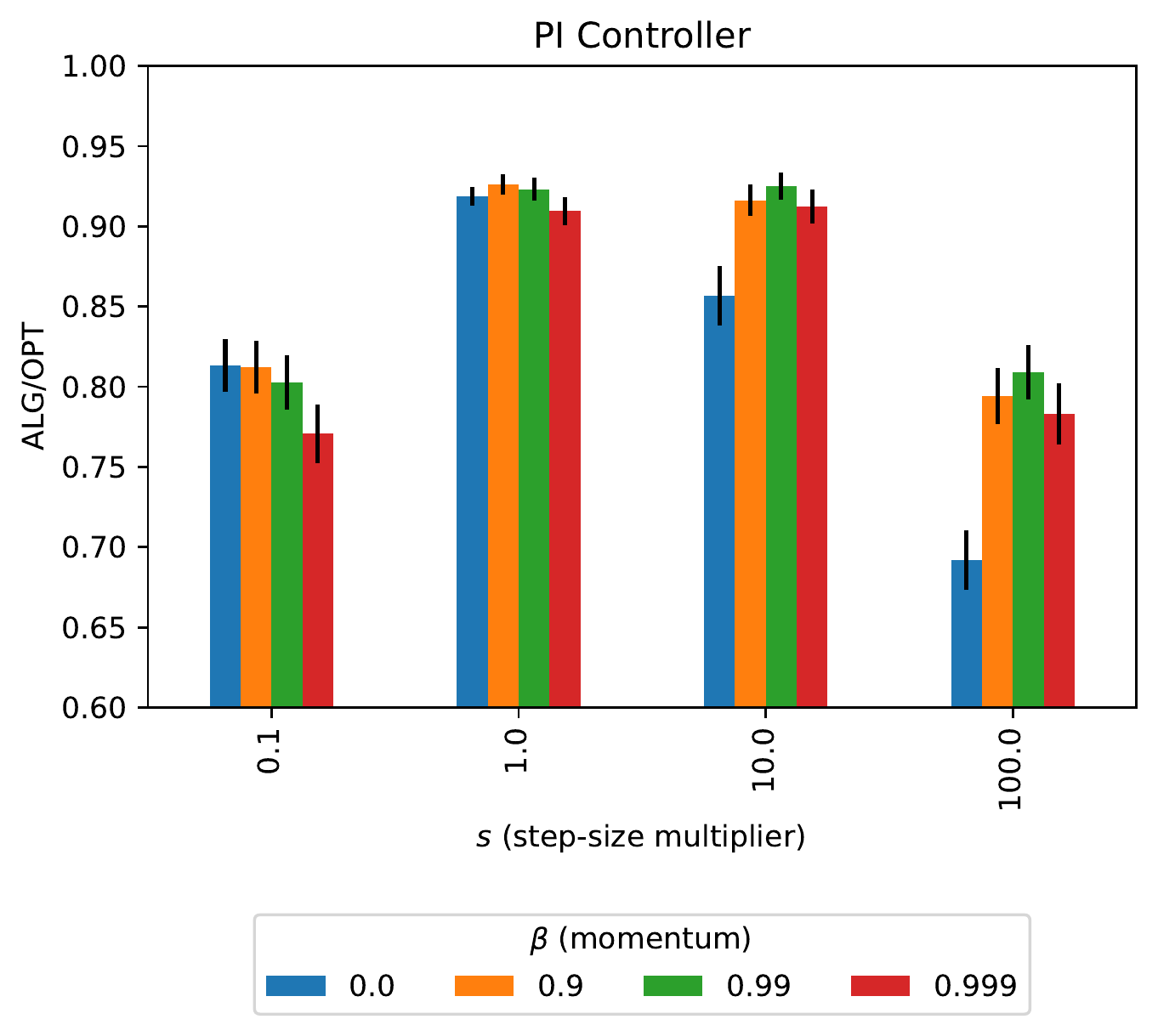}}\hfill
     \subcaptionbox{PD controller ($\alpha_I = 0$) for different values of $\alpha_D$ and step-sizes multipliers $s$.\label{fig:experiments-PD}}{
     \includegraphics[width=0.45\textwidth]{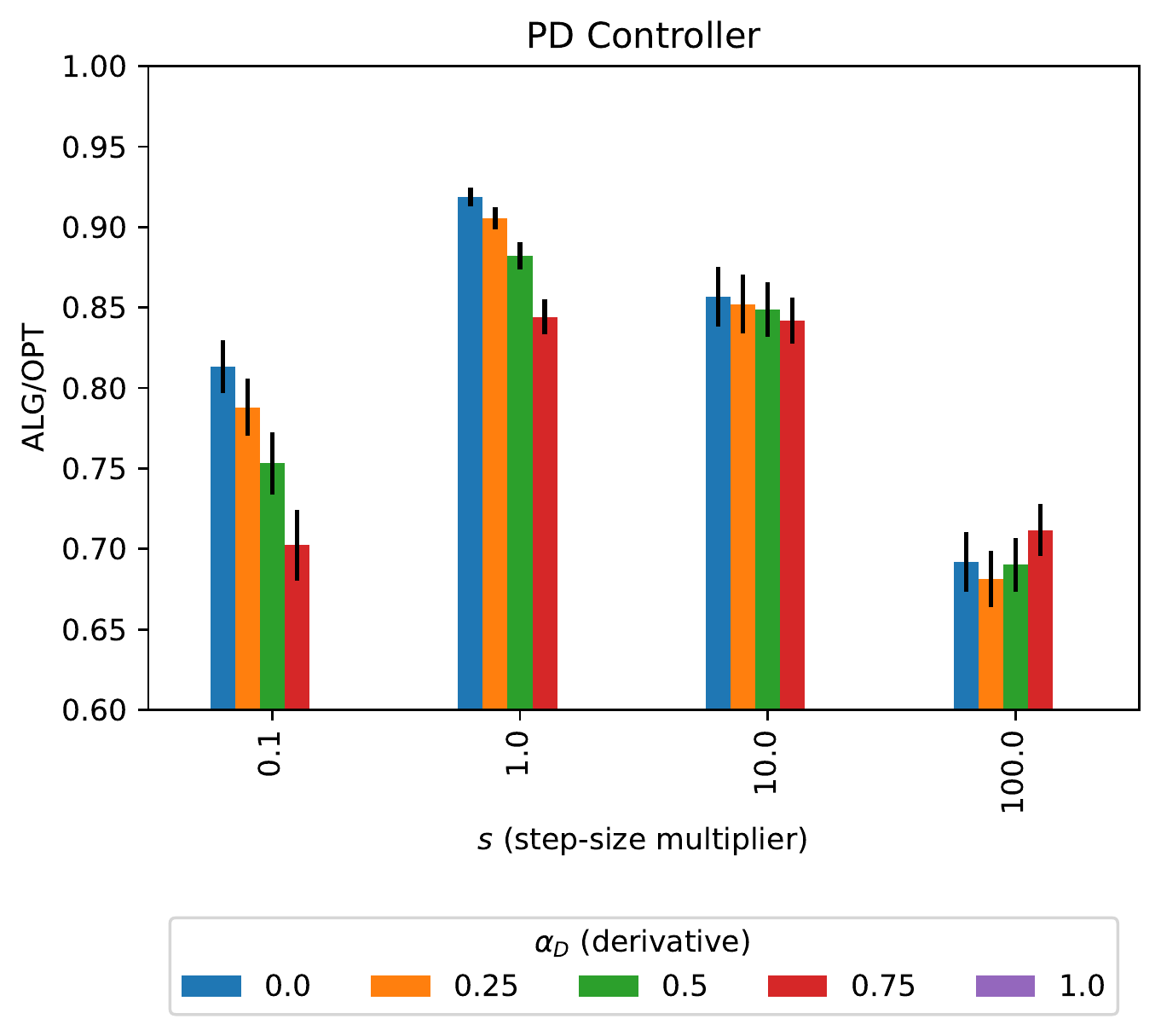}}  \hfill
     \subcaptionbox{PID controller for different values of $\alpha_D$ and $\alpha_I$ ($\alpha_I + \alpha_D \le 1$) with $s=10$ and $\beta=0.99$.\label{fig:experiments-PID}}{
     \includegraphics[width=0.45\textwidth]{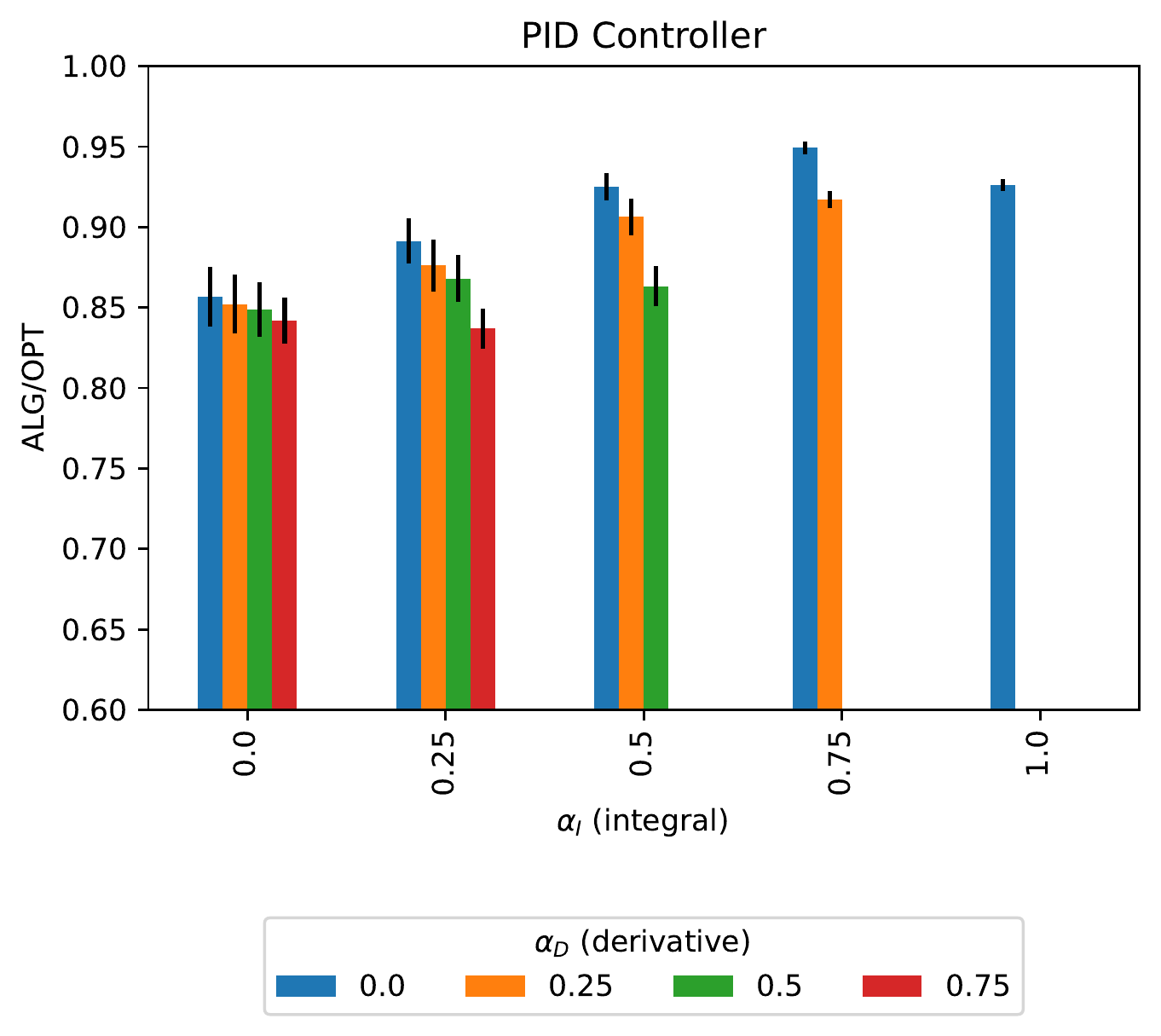}}   
     \caption{Average competitive ratios for Algorithm~\ref{al:PID}.}
     \label{fig:experiments}
 \end{figure}

We next summarize the main findings from our experiments:

(1) \emph{Summary.} The highest performance is achieved by a PID controller with step-size multiplier $s=10$ and $\alpha_I = 0.75$, which achieves a relatively high competitive ratio of $0.949$. Introducing a derivative term does not lead to higher performance. In contrast, a pure P controller obtains a competitive ratio of $0.919$ performance when properly tuned (with a step-size multiplier of $s=10$). The pure P controller performs poorly if the step-size is not properly tuned. The best PID controller performs 3\% better than a pure P controller and, as we discuss below, is more robust to step-size misspecification.

    
(2) \emph{PI controller (Figure~\ref{fig:experiments-PI}).} Around the optimal step-size multiplier of $s=10$, increasing the momentum, i.e., increasing the parameter $\beta$ in the integral term, yields modest improvements in performance. When the step-size is small, introducing momentum has little impact on performance. Large values of $\beta$, however, can decrease performance. This is expected as momentum can add too much friction to the algorithm, and the dual variables might take too long to converge to optimal values. Momentum has non-trivial influence on performance when the step-size is large. After a transient period, the dual variables would perform a random walk around the orbit of the dual variables. When the step-size is large, the time to reach this orbit is short, but the steps of the random walk would be of order $\eta \| z_t \|_2$. Introducing momentum averages the values of the gradient, which reduces the size of the orbit to order $\eta (1-\beta)^{1/2}$. This leads to increased expected performance when the step-sizes are large and reduces the variance, which is also attractive in practice.

(3) \emph{PD controller (Figure~\ref{fig:experiments-PD}).} A pure D controller ($\alpha_D = 1$) leads to poor performance. In our case, the difference of errors would be $g_t - g_{t-1} = b_{t-1}(x_{t-1}) - b_t(x_t)$. When expenditures are noisy and actions do not change too much from one period to the next, the signal-to-noise ratio of the difference in errors is usually too low to be useful in practice. When the step-size multiplier is low ($s = 0.1$), introducing a derivative controller hurts performance. The derivative term has little impact around the optimal step-size multiplier of $s=10$ (except for the case $\alpha_D = 1$). Interestingly, introducing a D term does increase performance when the step-size is large. A possible explanation for this behavior is that the D term, which corresponds to optimism, makes the algorithm an approximation of the classic proximal point method~\citep{mokhtari2020unified}. As an implicit discretization of gradient flow, it is well known that the proximal point method is more stable with large step-sizes than the explicit discretization of gradient flow, i.e., gradient descent~\citep{larsson2003partial,lu2022sr}.

(4) \emph{PID controller (Figure~\ref{fig:experiments-PID}).} With the best step-size multiplier choice (i.e., $s=10$) and momentum parameter (i.e., $\beta=0.99)$, it turns out that adding the I term and/or the D term does not significantly affect the performance of the algorithm. Finally, we would like to mention that while not all choices of parameters satisfy the real-roots assumption (Assumption \ref{ass:real-roots}), all of the parameter choices shown in Figure~\ref{fig:experiments-PID} achieve a high competitive ratio of around 0.9. As such, we conjecture that the asymptotic optimality of PID controllers may hold even without Assumption~\ref{ass:real-roots}.
    


An interesting takeaway is that some degree of momentum via the integral term can increase the robustness of the algorithm. In practice, it is usually impossible to pick the optimal step-size since this depends on the parameters of the problem, which are usually unknown, and even if the estimates are known, these can change unexpectedly in the real world. Therefore, one could end up overestimating or underestimating the true optimal step-size. Although momentum has little impact if step-sizes are underestimated, it can hedge against the risk of overestimating step-sizes, thus increasing the operating range in which the algorithm attains good performance.

\section{Conclusion and Future Directions}

{
In this paper, we uncover a fundamental connection between dual-based PID controllers and first-order algorithms for online allocation problems. We introduce CMD, a first-order algorithm for online convex optimization, which updates iterates based on a weighted moving average of past gradients and leverages a regret bound for CMD to give the first performance results for dual-based PID controllers for online allocation problems. 

We conclude by listing a few open questions and future directions. In many online allocation problems (taking budget pacing as an example), the data is usually neither adversarial nor i.i.d.~in practice but shares certain non-stationary patterns. It is interesting to study the performance of dual-based PID controllers in a non-stationary setting. 
Finally, it remains open to show whether CMD can be helpful for general online mirror descent in the stochastic i.i.d.~setting. While we show that CMD can lead to low regret, the regret bound deteriorates as the amount of momentum increases. We believe that this degradation is unavoidable in the case of adversarial input, and, in the case of stochastic input, it should be possible to leverage the statistical averaging effect of momentum to show that introducing momentum strictly improves regret.}

\bibliographystyle{plainnat}
\bibliography{references}

\newpage

\appendix

\newpage
\appendix

\section{Missing Proofs of the Main Results}
\subsection{Proof of Proposition \ref{prop:PID-equi}}\label{sec:proof:prop:PID-equi}
Let $e_t = (1-\beta) \sum_{s=0}^{t-1} \beta^{s} g_{t-s}$. First, it is easy to check from the definitions of $\lambda_i$, $\alpha_P,\alpha_I$, and $\alpha_D$ that 
\begin{align*}
    \eta z_t &= \eta \sum_{s\le t} \lambda_{t-s} g_s = \eta \lambda_0 g_t + \eta \lambda_1 g_{t-1} + \eta \sum_{s\le t-2} \lambda_{t-s} g_s\\
    &=\eta \alpha_P g_t + \eta \alpha_I e_t + \eta \alpha_D (g_t - g_{t-1}) = K_P g_t + K_I \sum_{s=0}^{t-1} \beta^{s} g_{t-s} + K_D(g_t - g_{t-1})\ .
\end{align*}
Furthermore, notice that the last of the three expressions in~\eqref{eq:new-al} is a mirror descent step, and we can rewrite it as
\begin{equation}
    \nabla h(\tmu_t)=\nabla h(\mu_t)-\eta z_t\ , \ \ \ \  \mu_{t+1}=\arg\min_{\mu\in \mathbb R_+^m} V_h(\mu,\tmu_t) \,.
\end{equation}
The solution $\mu_t$ is first mapped to the ``dual'' space using the gradient of the reference function to a solution $y_t = \nabla h(\mu_t)$, i.e., $(y_t)_j=\ell_j(\mu_{t,j})$, then the dual solution is updated in the direction of the average gradient to obtain a new solution $\tilde y_t = \nabla h(\tilde \mu_t)$, and solutions are finally projected back to the feasible set $\mathbb R_+^m$ by $\ell_j^{-1}$. Since $h$ is coordinate-wise separable, we recover \eqref{eq:non-linear-update}. This finishes the proof.

\subsection{Proofs of Lemma~\ref{lemma:diff-iterate}, Lemma~\ref{lem:decomposition}, and Lemma~\ref{lemma:omd}}\label{sec:proof_thm}

    

\begin{proof}[Proof of Lemma \ref{lemma:diff-iterate}]
The dual mirror descent update in Algorithm~\ref{al:omd-m} can be written as 
\begin{equation}\label{eq:another_update}
    \nabla h(\tmu_t)=\nabla h(\mu_t)-\eta z_t\ , \ \ \ \  \mu_{t+1}=\arg\min_{\mu\in\mathcal U} V_h(\mu,\tmu_t) \ ,
\end{equation}
which can be easily verified by noticing that the optimality condition of Algorithm~\ref{al:omd-m} and \eqref{eq:another_update} are both
$0\in \nabla h(\mu_{t+1})-\nabla h(\mu_t) +\eta z_t + \mathcal N_{\mathcal U}(\mu_{t+1})$, where { $\mathcal N_{\mathcal U}(\mu_{t+1})=\{g\in\RR^m | g^\top(\mu-\mu_{t+1})\le 0 \text{ for all } \mu\in\mathcal U\}$} is the normal cone of $\mathcal U$ at $\mu_{t+1}$. { Let $h^*$ be the convex conjugate of $h$, then $\nabla h^*=(\nabla h)^{-1}$, and $\nabla h^*$ is $1/\sigma$-Lipschitz continuous in the primal norm by recalling that $h$ is $\sigma$-strongly-convex in the dual norm (see, e.g.,} \cite{kakade2009duality}). Thus,
\begin{align}\label{eq:diff-mu}
\begin{split}
        \|\tmu_t-\mu_t\dualnorm=&\|\nabla h^*(\nabla h (\tmu_{t})) - \mu_t\dualnorm=\|\nabla h^*(\nabla h (\mu_{t})-\eta z_t) - \mu_t\dualnorm\\
    =&\|\nabla h^*(\nabla h (\mu_{t})-\eta z_t) - \nabla h^*(\nabla h (\mu_{t}))\dualnorm\le \frac{\eta}{\sigma} \|z_t\primalnorm \ .
\end{split}
\end{align}
Meanwhile, it follows by the generalized Pythagorean Theorem of Bregman projection~\cite{nielsen2007bregman} that
\begin{equation}\label{eq:m-1}
    V_h(\mu_t, \tmu_t)\ge V_h(\mu_t, \mu_{t+1})+V_h(\mu_{t+1},\tmu_t) \ge V_h(\mu_t, \mu_{t+1}) \ge \frac{\sigma}{2}\|\mu_{t+1}-\mu_t\dualnorm^2 \ ,
\end{equation}
where the second inequality is from the non-negativity of Bregman divergence and the last inequality uses the strong-convexity of $h$ w.r.t.~dual norm. On the other hand, we have
\begin{align}\label{eq:m-2}
\begin{split}
    V_h(\mu_t, \tmu_t)&\le V_h(\mu_t, \tmu_t)+V_h(\tmu_t,\mu_t)= \left(\nabla h (\tmu_t)-\nabla h(\mu_t) \right)^\top \left(\tmu_t-\mu_t\rangle\right)\\
    &\le \|\nabla h (\tmu_t)-\nabla h(\mu_t)\| \|\tmu_t-\mu_t\dualnorm \le \frac{\eta^2}{\sigma}\|z_t\primalnorm^2\ ,
\end{split}
\end{align}
where the second inequality follows from Cauchy-Schwartz, and the third inequality utilizes \eqref{eq:another_update} and \eqref{eq:diff-mu}. 
We finish the proof by combining \eqref{eq:m-1} and \eqref{eq:m-2}.
\end{proof}

\begin{proof}[Proof of Lemma \ref{lem:decomposition}]
    We just need to check the corresponding coefficients with respect to $g_t \mu_t$ and $g_t \mu$ of both sides of the equation \eqref{eq:decomposition} are the same. 

    { Let $\vec{g} = (g_t^\top)_{t=1}^T \in \mathbb R^{T\times m}$ be a matrix with gradients stacked vertically and $\vec{z} = (z_t^\top )_{t=1}^T \in \mathbb R^{T\times m}$ be a matrix with the convolution of gradients stacked vertically. We have that $\vec{z} = R \vec{g}$ and $\vec{g} = Q \vec{z}$. Moreover, denote by $\vec{a} = (a_t)_{t=1}^T$ the auxiliary sequence defined in the statement of the theorem, which can be written as $\vec{a}^\top = \mathbf 1^\top Q$  with $\mathbf 1$ a vector of ones with $T$ entries, i.e., $a$ is the sum of the columns of $Q$.
    Notice that in matrix notation, we can write for all $\mu \in \mathbb R^m$
    \begin{align*}
        \sum_{t=1}^T a_t z_t^\top \mu &= \vec{a}^\top \vec{z} \mu = \mathbf 1^\top Q R \vec{g} \mu = \mathbf 1^\top \vec{g} \mu = \sum_{t=1}^T g_t^{\top} \mu\,,
    \end{align*}
    where we used that $Q = R^{-1}$. Thus, the coefficient of $g_t \mu$ in both sides of \eqref{eq:decomposition} matches. Moreover, we have the following identity for $t<T$ 
    \begin{equation}\label{eq:sum}
        \sum_{t\ge s} a_t \lambda_{t-s} = \vec{a}^\top R e_t = \mathbf 1^\top Q R e_t =\mathbf 1^\top e_t = 1\,,
    \end{equation}
    where we denote by $e_t$ the $T$-dimensional unit vector and $Q= R^{-1}$.}
    
    
    We next check that the coefficients of $g_t \mu_t$ match. We have 
    \begin{align*}
        \sum_{t=1}^T g_t^{\top} \mu_t - \sum_{t=1}^T a_t z_t^\top \mu_t &= \sum_{t=1}^T g_t^{\top} \mu_t - \sum_{s=1}^T \pran{\sum_{t\ge s} a_t \lambda_{t-s}} g_s^{\top} \mu_t = \sum_{t=1}^T g_t^{\top} \mu_t -\sum_{t=1}^T g_t^{\top} \sum_{s=t}^T a_s \lambda_{s-t}  \mu_s\\
        &= \sum_{t=1}^T \pran{1-\sum_{s=t}^T a_s \lambda_{s-t}} g_t^{\top} \mu_t -\sum_{t=1}^T g_t^{\top} \sum_{j=t+1}^T (\mu_{j}-\mu_{j-1}) \sum_{s=j}^T a_s \lambda_{s-t}\\
        &=-\sum_{t=1}^T g_t^{\top} \sum_{j=t+1}^T (\mu_{j}-\mu_{j-1}) \sum_{s=j}^T a_s \lambda_{s-t} = -\sum_{s=1}^T \sum_{t=s+1}^T b_{t,s} g_s^{\top} (\mu_t-\mu_{t-1})\ ,
    \end{align*}
    where the third equality uses     that $\mu_s=\mu_t+\sum_{j=t+1}^s (\mu_j-\mu_{j-1})$, and the fourth uses \eqref{eq:sum}. This finishes the proof.    
\end{proof}

\begin{proof}[Proof of Lemma~\ref{lemma:omd}]
Because the Bregman divergence $V_h$ is differentiable and convex in its first argument, and $\mathcal U$ is convex, the first order conditions for the Bregman projection (see, e.g., Proposition 2.1.2 in \cite{Bertsekas}) are given by
\begin{align}\label{eq:foc-bregman}
    \left( z_t + \frac 1 \eta \left( \nabla h(\mu_{t+1}) - \nabla h(\mu_t)\right)\right)^\top \left( \mu - \mu_{t+1} \right) \ge 0\,, \quad \forall \mu \in \mathcal U\,.
\end{align}
Therefore, it holds for any $\mu \in \mathcal U$ that
\begin{equation}  \label{eq:ineq_chain1}
\begin{split}
     \langle z_t, \mu_t - \mu \rangle\
    &=\langle z_t, \mu_t - \mu_{t+1} \rangle\ + \langle z_t, \mu_{t+1} - \mu \rangle\ \\
    &\le\langle z_t, \mu_t - \mu_{t+1} \rangle\ + \frac 1 \eta \left( \nabla h(\mu_{t+1}) - \nabla h(\mu_t)\right)^\top \left( \mu - \mu_{t+1} \right) \\
    &=\langle z_t, \mu_t - \mu_{t+1}\rangle + \frac{1}{\eta} V_h(\mu,\mu_t) - \frac{1}{\eta} V_h(\mu,\mu_{t+1}) - \frac{1}{\eta} V_h(\mu_{t+1},\mu_t) \\
 &\le  \langle z_t, \mu_t - \mu_{t+1}\rangle + \frac{1}{\eta} V_h(\mu,\mu_t) - \frac{1}{\eta} V_h(\mu,\mu_{t+1}) - \frac{\sigma}{2\eta} \|\mu_{t+1}-\mu_t\dualnorm^2 \\
  &\le   \frac{\eta}{2\sigma}\|z_t\primalnorm^2  + \frac{1}{\eta} V_h(\mu,\mu_t) - \frac{1}{\eta} V_h(\mu,\mu_{t+1}) \ , 
\end{split}
\end{equation}
where the first inequality follows from \eqref{eq:foc-bregman}; the second equality follows from Three-Point Property stated in Lemma 3.1 of \cite{chen1993convergence}; the second inequality is by strong convexity of $h$; and the third inequality uses $a^2 + b^2 \ge 2 a b$ for $a,b \in \RR$ and Cauchy-Schwartz to obtain
\begin{align*}
    \frac{\sigma}{2\eta} \|\mu_{t+1}-\mu_t\dualnorm^2+\frac{\eta}{2\sigma}\|z_t\primalnorm^2 &\ge \|\mu_{t+1}-\mu_t\dualnorm\|z_t\primalnorm \ge   |\langle z_t, \mu_t - \mu_{t+1}\rangle| \ ,
\end{align*}
The proof follows.
\end{proof}

\section{Proof of Lemma \ref{lemma:iterates}}\label{app:missing}

\begin{proof}
    
Fix a resource $j$. Denoting $\bar G= \rho_j + \ubbinfty$, we observe that gradients satisfy $|g_{t,j}| \le |b_{t,j}(x_t)| + |B_j/T| \le \ubbinfty + \rho_j = \bar G$. We assume that $B_j/T < \ubbinfty$ as otherwise, the dual variables are monotonically decreasing and the bound is trivial. Using separability of the reference function, we can denote by  $\Delta_{t,j} = \ddh_j(\mu_{t,j}) - \ddh_j(\mu_{t-1,j})$ the difference of the iterates evaluated at the derivative of the reference function for resource $j$.

We first argue that $|\Delta_{t,j}| \le 2 \eta \bar G$ for all $t \ge 1$. We can write the update as follows: $\ddh_j(\tmu_{t,j}) = \ddh_j(\mu_{t,j}) - \eta z_{t,j}$ and $\mu_{t+1,j} = \max(\tmu_{t,j},0)$. Let $e_{t,j} = (1-\beta) \sum_{s=1}^t \beta^{t-s} g_{s,j}$. Therefore, 
\begin{align}\label{eq:bound-squared-change}
    |\Delta_{t+1,j}| &=  \left|  \ddh_j(\mu_{t,j}) - \ddh_j(\mu_{t-1,j}) \right| \le \left|\ddh_j(\tmu_{t,j}) - \ddh_j(\mu_{t-1,j}) \right| = \eta |z_{t,j} |\nonumber\\
    &= \eta \left| \alpha_P g_{t,j} + \alpha_I e_{t,j} + \alpha_D (g_{t,j} - g_{t-1,j} \right|\nonumber\\
    &\le \eta \left[ \alpha_P | g_{t,j}| + \alpha_I |e_{t,j}| + \alpha_D( | g_{t,j}| + | g_{t-1,j}| )\right]
    \le 2 \eta \bar G\,,
\end{align}
where the first inequality follows because  $\ddh_j$ is monotone, the second inequality from the triangle inequality, and the last inequality because $\alpha_P + \alpha_I + \alpha_D = 1$ together with $|e_{t,j}| \le (1-\beta) \sum_{s=1}^t \beta^{t-s} |g_{s,j}| \le \bar G (1-\beta) \sum_{s=1}^t \beta^{t-s} = \bar G (1-\beta^t)/(1-\beta) \le \bar G$ from the formula for the geometric sum and because $\beta \in (0,1)$.

Define $h^*_j(c)=\max_{\mu_j} \{c \mu_j - h_j(\mu_j)\}$ as the conjugate function of $h_j(\mu_j)$, then by Assumption~\ref{ass:h-sep} it holds that $h^*_j(\cdot)$ is a $\frac{1}{\sigma}$-smooth univariate convex function~\citep{kakade2009duality}. Furthermore, $\ddh^*_j(\cdot)$ is increasing, and $\ddh^*_j(\ddh_j(\mu))=\mu$. 

We show the result by contradiction. Suppose there exists a time period $\bar{\tau}$ and a resource $j$ such that $\mu_{\bar{\tau},j}>\mu_j^{\max}>\frac{\bar{f}}{\rho_j}$. Then there must be a time period $1\le \underline \tau\le \bar{\tau}$ such that $\mu_{\underline \tau -1,j} < \bar f / \rho_j < \mu_{\underline \tau,j},\mu_{\underline \tau+1,j},..., \mu_{\bar{\tau},j}$ by noticing $\mu_{1,j} \le \bar f / \rho_j \le \mumax$. Now consider a sequence of consecutive time periods $t=\underline \tau, \tau+1,\ldots,\bar{\tau}-1$. We know that there must never be projection to the positive orthant in these steps, because otherwise $\mu_{t+1,j}=0$, which violates our definition of the sequence of time periods $\underline \tau,\ldots,\bar{\tau}-1$.

Then, we have for all $t = \underline \tau,\ldots,\bar \tau-1$
\begin{align*}
 \Delta_{t+1,j} &= \ddh_j(\mu_{t+1,j}) - \ddh_j(\mu_{t,j}) = - \eta z_{t,j} = -\eta \left[\alpha_P g_{t,j} + \alpha_I e_{t,j}+\alpha_D ( g_{t,j} - g_{t-1,j} ) \right] \\
    &=-\eta\left[\alpha_P g_{t,j} + \alpha_I (\beta e_{t-1,j}+(1-\beta)g_{t,j}) +\alpha_D ( g_{t,j} - g_{t-1,j}) \right]\,,
\end{align*}
where the second equation follows because the dual variables are not projected, the third from the definitions of $z_t$, and the fourth because $e_t = \beta e_{t-1} + (1-\beta) g_t$. Similarly, we have
\begin{align*}
 \beta \Delta_{t,j} &= - \beta \eta z_{t-1,j} = - \beta \eta \left(\alpha_P g_{t-1,j} + \alpha_I e_{t-1,j}+\alpha_D ( g_{t-1,j} - g_{t-2,j} ) \right)\,.
\end{align*}
We substract these two equations to cancel the term $e_{t-1,j}$ and obtain that
\begin{align}\label{eq:update-no-projection}
    \Delta_{t+1,j} &= \beta \Delta_{t,j} + \eta \alpha_P (\beta g_{t-1,j} - g_{t,j}) + \eta \alpha_D \left[ \beta (g_{t-1,j} - g_{t-2,j}) - (g_{t,j} - g_{t-1,j)} \right] - \eta \alpha_I (1-\beta) g_{t,j}\,.
\end{align}

Because rewards are bounded by $f_t(x) \le \ubf$, using that $0\in\cX$ is feasible and the choice of $x_t$, it holds that $0=f_t(0)\le f_t(x_t)-\mu_t^\top b_t(x_t)\le \ubf-\mu_t^\top b_t(x_t)$, whereby $\mu_t^\top b_t(x_t)\le \ubf$. Since $\mu_t\ge 0, b_t(x)\ge 0, x_t\in\cX\subseteq \RR^d_{+}$, it holds for every resource $j$ that $b_{t,j}(x_t) \le \frac{\ubf}{\mu_{t,j}}$. 
This implies that
\[
    b_{t,j}(x_t) 
    \le \frac{\ubf}{\mu_{t,j}} \le \rho_j\,,
\]    
because $\mu_{t,j} \ge \bar f / \rho_j$ for $t = \underline \tau,\ldots,\bar \tau$. Therefore, $-g_{t,j} = b_{t,j}(x_t) - \rho_j \le 0$ and \eqref{eq:update-no-projection} reduces to
\begin{align*}
 \Delta_{t+1,j} &\le 
    \beta \Delta_{t,j} +\eta\alpha_P\left( \beta g_{t-1,j}-g_{t,j} \right) + \eta \alpha_D \left[ \beta (g_{t-1,j} - g_{t-2,j}) - (g_{t,j} - g_{t-1,j)} \right]\,,    
\end{align*}
which implies, by recursively applying the inequality, that for $\underline \tau < t \le \bar \tau$
\begin{align}\label{eq:upper-bound-delta}
    \Delta_{t,j} &\le \beta^{t-\underline \tau} \Delta_{\underline \tau,j} + \eta \alpha_P \left( \beta^{t - \underline \tau} g_{\underline \tau - 1, j} - g_{t - 1,j} \right) + \eta \alpha_D \left[ \beta^{t - \underline \tau} \left( g_{\underline \tau - 1, j} - g_{\underline \tau - 2, j} \right) - \left(g_{t-1,j} - g_{t-2,j}\right) \right]\nonumber\\
    &\le \beta^{t-\underline \tau} \Delta_{\underline \tau,j} + \eta \alpha_P \beta^{t - \underline \tau} \bar G + 2 \eta \alpha_D \beta^{t - \underline \tau} \bar G - \eta \alpha_D \left(g_{t-1,j} - g_{t-2,j}\right)\,,
\end{align}
where the last inequality follows from using again that $-g_{t - 1,j} \ge 0$ and that $g_{t,j} = \rho_j - b_{t,j}(x_t) \le \rho_j$ because $b_{t,j}(x_t) \ge 0$. Therefore, for $\underline \tau \le t \le \bar \tau$ we have that
\begin{align*}
    \ddh_j(\mu_{t,j}) &= \ddh_j(\mu_{\underline \tau - 1,j})
    +\Delta_{\underline \tau,j} + \sum_{s=\underline \tau+1}^t \Delta_{s,j}\\
    &\le \ddh_j(\mu_{\underline \tau - 1,j})
    +  \Delta_{\underline \tau,j} \sum_{s=\underline \tau}^t \beta^{t-\underline \tau} + \eta \alpha_P \bar G \sum_{s=\underline \tau+1}^t \beta^{t-\underline \tau}+ 2 \eta \alpha_D \bar G \sum_{s=\underline \tau+1}^t \beta^{t-\underline \tau}
    + \eta \alpha_D \left(g_{\underline \tau - 1, j} - g_{t-1,j}     \right)\\
    &\le \ddh_j(\mu_{\underline \tau - 1,j})
    +  \Delta_{\underline \tau,j} \sum_{s=\underline \tau}^t \beta^{t-\underline \tau} + \eta \alpha_P \bar G \sum_{s=\underline \tau+1}^t \beta^{t-\underline \tau}+ 2 \eta \alpha_D \bar G \sum_{s=\underline \tau}^t \beta^{t-\underline \tau}\\
    &\le \ddh_j(\mu_{\underline \tau - 1,j}) + \frac{2 \eta \bar G + \eta \alpha_P \bar G + 2 \eta \alpha_D \bar G} {1-\beta}
    \le \ddh_j(\mu_{\underline \tau - 1,j}) + \frac{4 \eta \bar G} {1-\beta} \,,
\end{align*}
where first inequality follows from \eqref{eq:upper-bound-delta} and telescoping the derivative term, the second inequality because $|g_{\underline \tau-1,j}| \le \bar G$ and $g_{t-1,j} \ge 0$, the third inequality follows from \eqref{eq:bound-squared-change} together with $\sum_{s=\underline \tau}^t \beta^{t-\underline \tau} \le 1/(1-\beta)$, and the last because $\alpha_P + \alpha_D \le 1$. This implies that
\begin{align*}
    \mu_{\bar{\tau},j} &= \ddh^*_j(\ddh_j(\mu_{\bar{\tau},j}))
    \le \ddh^*_j\left( \ddh_j(\mu_{\underline \tau - 1,j}) + \frac{4 \eta \bar G} {1-\beta} \right)\le \ddh^*_j\left( \ddh_j(\mu_{\underline \tau - 1,j})\right) + \frac{4 \eta \bar G} {(1-\beta)\sigma}\\
    &   =
    \mu_{\underline \tau - 1,j} + \frac{4 \eta \bar G} {(1-\beta) \sigma} \le \mumax_j\,,
\end{align*}
where the first inequality follows from the monotonicity of $\ddh^*_j(\cdot)$, the second inequality is from the $\frac{1}{\sigma}$-smoothness of $h^*_j(\cdot)$, the second equality uses $\ddh^*_j(\ddh_j(\mu_j))=\mu_j$, and the last inequality utilizes $\mu_{\underline \tau - 1,j} \le \ubf / \rho_j$ and the definition of $\mumax_j$. The result follows by contradiction. 
\end{proof}

\end{document}